\newtheorem{definition}{Definition}[section]
\newtheorem{theorem}{Theorem}[section]
\newtheorem{proposition}{Proposition}[section]
\newtheorem{lemma}{Lemma}[section]
\newtheorem{remark}{Remark}[section]
\newcommand{\nada}[1]{}
\newcommand{\R}{\mathbb{R}} 
\newcommand{\T}{\mathbb{T}}
\newcommand{\N}{\mathbb{N}} 
\newcommand{\Z}{\mathbb{Z}}
\newcommand{\dom}{\mathcal{D}} 
\newcommand\D{\partial}
\newcommand\X{\times}
\renewcommand{\div}{\mathop\mathrm{div}\nolimits}
\newcommand{\bbun}{{u}^m} 
\newcommand{\ut}{u(t)}
\newcommand{\us}{u(s)}
\numberwithin{equation}{section}
\begin{document}

\date{\today}

\title[Uniqueness of particle trajectories for  non-Newtonian
2D fluids] {An elementary proof of uniqueness of the particle
  trajectories for solutions of a class of  shear-thinning
  non-Newtonian 2D fluids}

\author[L. C. Berselli]{Luigi C. Berselli}

\address{(Luigi C. Berselli) Dipartimento di Matematica Applicata
  ``U.~Dini,'' Universit\`a di Pisa, Via F.~Buonarroti 1/c, I-56127,
  Pisa, Italia}

\email{\href{mailto:berselli@dma.unipi.it}{berselli@dma.unipi.it}}

\urladdr{\url{http://users.dma.unipi.it/berselli}}

\author[L. Bisconti]{Luca Bisconti}

\address{(Luca Bisconti) 
Dipartimento di Sistemi e Informatica, Universit\`a degli Studi di Firenze,
Via S.~Marta~3, I-50139 Firenze, Italia}

\email{\href{mailto:luca.bisconti@unifi.it}{luca.bisconti@unifi.it}}

\date{\today}

\maketitle

\begin{abstract}
  We prove some regularity results for a class of two dimensional
  non-Newtonian fluids. By applying results from~[Dashti and Robinson,
  \textit{Nonlinearity}, 22 (2009), 735-746] we can then show
  uniqueness of particle trajectories.
\end{abstract}
\section{Introduction}
In this paper we consider the following system of partial differential
equations
\begin{subequations} 
  \label{eq:genNewModel-m}
  \begin{align}
    u_t -\nu_0\Delta u -\nu_1 \div S (\dom u)+ ( u \cdot \nabla)\, u
    + \nabla \pi = f\quad \, \textrm{in}\quad \,& [0,T] \X \Omega,
    \label{eq:genNewMod1-m}
    \\
    \div \, u = 0 \quad \,\textrm{in}\quad \, &[0,T]\X \Omega,
    \label{eq:genNewMod2-m} 
    \\
    u (0) = u_0 \quad \, \textrm{in}\quad \, &\Omega,
    \label{eq:genNewMode3-m}
  \end{align}
\end{subequations}
where $\Omega$ denotes either a two-dimensional bounded domain or the
two dimensional flat torus, the vector field $ u=(u_1, u_2)$ is the
velocity, the scalar $\pi$ is the kinematic pressure, the vector
$f=(f_1, f_2)$ is the external body force, $ u_0$ is the initial
velocity, and $\nu_0,\,\nu_1$ are positive constants. We denote by
\begin{equation*}
  \dom u:=\frac{1}{2}(\nabla  u + \nabla  u^T)=
  \frac{1}{2}(\D_j u_i + \D_i u_j)\quad \text{for } i,j=1,2,
\end{equation*}
the symmetric part of $\nabla u$, the convective term is
$(u\cdot\nabla)\, u:=\sum_{k=1}^2u_k\D_k u$, and $S$
denotes the extra stress tensor, defined by
\begin{equation}
  \label{eq:extra-stress-tensor}
  S (\dom u):=(\delta +
  |\dom u| )^{p-2}\dom u, \qquad p\in [1,2),
\end{equation}
where $\delta$ is a non-negative
constant. System~\eqref{eq:genNewModel-m} describes a shear-thinning
homogeneous fluid and for an introduction to the mathematical theory
see M{\'a}lek, Rajagopal, and R{\r u}{\v{z}}i{\v{c}}ka~\cite{MRR1995}.
We mainly study the problem, endowed with homogeneous Dirichlet
boundary conditions
\begin{equation} 
  \label{eq:bound-cond-fake} 
  u_{\vert_{\Gamma}}=0\quad  \textrm{where}\quad \Gamma=\D\Omega,
\end{equation}
but we give some remarks also on the periodic case.

The main goal of this paper is to study the problem of uniqueness for
the \textit{particle trajectories} (or characteristics), which are
solutions of the following Cauchy problem
\begin{equation} 
  \label{eq:traj}
  \left\{
    \begin{aligned}
      &\frac{d {X}(t)}{dt} = u({X}(t), t)\qquad t\in[0,T],
      \\
      &{X}(0) = x\in \Omega,
    \end{aligned}
  \right. 
\end{equation}
where $u$ is the fluid velocity in~\eqref{eq:genNewModel-m}.  For the
3D~Navier-Stokes equations the problem of existence of particle
trajectories and Lagrangian representation of the flow started with
the work of Foias, Guillop{\'e}, and Temam~\cite{FGT1985}, and related
results of regularity in $\R^n$ are proved in Chemin and
Lerner~\cite{CL1995} by means of Littlewod-Paley decomposition. The
question of uniqueness has been recently addressed by elementary tools
and in a more general context in Robinson \textit{et
  al.}~\cite{DA-RO:2009,RS2009,RS2009b} and it is strictly related
with uniqueness for linear transport equations.  We consider here the
same problem, in the case of shear-thinning fluids, described
by~\eqref{eq:genNewModel-m}. To this end, we will study certain
regularity properties of the solutions of~\eqref{eq:genNewModel-m},
investigating when the velocity will verify the appropriate hypotheses
for uniqueness results.

In particular, classical results concerning Lipschitz continuous
fields $u$ (which generally can be verified checking that $\nabla u$
is bounded in the space variables) are not easily applicable here,
since such a regularity is very difficult to be proved, even in the
two dimensional case, for~\eqref{eq:genNewModel-m}. We recall that,
\textit{restricting to the two dimensional case}, some
$C^{1,\gamma}$-results are obtained in Kaplick{\'y}, M{\'a}lek, and
Star{\'a}~\cite{KMS1997,KMS1999} in the stationary case. Early results
in the time dependent case (but not up-to-the-boundary) are those by
Seregin~\cite{Ser1997}, while results in the space-periodic
time-dependent case have been obtained in~\cite{KMS2002}. We observe
that essentially all the above results require the extra-stress tensor
$S$ to be slightly smoother than that
in~\eqref{eq:extra-stress-tensor}.  In particular, it is requested
that the stress-tensor is replaced, for instance, by $S(\dom
u)=(\delta+|\dom u|^2)^{\frac{p-2}{2}}\dom u$. In any case we study
the regularity up-to-the-boundary with non-smooth initial data and our
results, proved in an elementary way, are original. Moreover, the
difficulties appearing in the 3D~case seem completely out of the
current mathematical knowledge for such fluids, and this explains why
we restrict to the two dimensional case.

Since we want to have \textit{elementary} proofs (in order to possibly
extend the results to the widest possible class of solutions and
stress-tensors) we will work with the classical energy-type
methods. Concerning uniqueness of particle trajectories, there have
been some recent improvements, strictly related with the Osgood
criterion and with Log-Lipschitz properties of Sobolev functions
$W^{s+1,q}(\R^d)$ in the case of limiting Sobolev exponents such that
$q=\frac{d}{s}$. In particular we will use the result below,
{proved in~\cite[Theorem 2.1]{DA-RO:2009}}.
\begin{theorem} 
  \label{thm:Dashti-Robinson} 
  Let $\Omega$ be either the whole space $\R^d$, $d\geq 2$, a
  periodic $d$-dimensional domain, or an open bounded subset of $\R^d$
  with a sufficiently smooth boundary. Let assume that for some $p>1$
  \begin{equation*}
    u\in L^p(0,T; W^{\frac{d-2}{2}, 2}(\Omega))\quad 
    \textrm{and}\quad 
    \sqrt{t}\,u\in L^2(0,T;  W^{\frac{d+2}{2}, 2}(\Omega)),
  \end{equation*}
  with $u_{|\Gamma} = 0$, when $\Omega$ is a domain with boundary. Then,
  the Cauchy problem~\eqref{eq:traj} has a unique solution in $[0,T]$.
\end{theorem}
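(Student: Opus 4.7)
The plan is to extract from the two assumed bounds a \emph{log-Lipschitz} modulus of continuity for $u(\cdot, t)$ in the space variable, with a time-dependent constant of controlled integrability, and then apply the Osgood uniqueness criterion to the ODE~\eqref{eq:traj}.

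\emph{Step 1 (spatial regularity).} I would first note that the exponent $s = (d+2)/2 = d/2 + 1$ is exactly the critical scale for Lipschitz regularity in $\R^d$. The gradient $\nabla u(\cdot, t) \in W^{d/2, 2}(\Omega)$ embeds into $\mathrm{BMO}(\Omega)$, and by integration along segments this yields the log-Lipschitz estimate
\[
|u(x, t) - u(y, t)| \,\le\, C\, \|u(t)\|_{W^{(d+2)/2, 2}(\Omega)}\, |x - y|\, \bigl(1 + \bigl|\log|x-y|\bigr|\bigr),
\]
for a.e.~$t \in (0, T]$ and $|x - y|$ below a constant depending on $\Omega$. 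The Dirichlet condition $u|_\Gamma = 0$ permits a standard extension so that the embedding applies up to the boundary in the bounded-domain case.

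\emph{Step 2 (ODE inequality).} Given two absolutely continuous solutions $X_1, X_2$ of~\eqref{eq:traj} sharing the initial condition $x$, I would set $\eta(t) := |X_1(t) - X_2(t)|$. Subtracting the two equations and inserting the bound of Step~1 along the trajectories, one obtains, as long as $\eta(t) \le 1/2$,
\[
\dot\eta(t) \,\le\, C\, g(t)\, \eta(t)\, \bigl(1 + |\log \eta(t)|\bigr), \qquad g(t) := \|u(t)\|_{W^{(d+2)/2, 2}}.
\]
Since $\int_0^{1/2} dr\,/\,[r\,(1 + |\log r|)] = +\infty$, Osgood's criterion will yield $\eta \equiv 0$ provided that $\int_0^{t_0} g(\tau)\,d\tau < +\infty$ for some $t_0 > 0$.

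\emph{Step 3 (integrability of $g$, the main obstacle).} From $\sqrt{t}\, u \in L^2(W^{(d+2)/2, 2})$ alone one only gets $g(t) = f(t)/\sqrt{t}$ with $f \in L^2(0, T)$, and the Cauchy--Schwarz bound $\int_0^T g\,dt \le \|f\|_{L^2}\,\|1/\sqrt{t}\|_{L^2}$ diverges. The first hypothesis $u \in L^p(W^{(d-2)/2, 2})$ with $p > 1$ is precisely what should close this gap. A natural route will be to upgrade Step~1 to a Brezis--Gallouet--Wainger-type inequality, in which the log-Lipschitz seminorm is controlled by the subcritical $W^{(d-2)/2, 2}$-norm times a factor depending at most logarithmically on the critical $W^{(d+2)/2, 2}$-norm; then H\"older's inequality in time, exploiting the slack between $p > 1$ and the $L^2$-weighted bound, will make the resulting estimate integrable near $t = 0$.

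The hard part is thus Step~3: both assumed norms sit at sharp critical scales (Sobolev in space, weighted $L^2$ in time), so the Osgood argument is only marginally compatible with them, and the improvement $p > 1$ in the first hypothesis provides exactly the slack needed to close the estimate. Once $\int_0^{t_0} g\,d\tau$ is known to be finite, the Osgood--Bihari manipulation of Step~2 is routine.
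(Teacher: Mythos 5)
The paper does not actually prove this statement: it is quoted verbatim from Dashti and Robinson \cite[Theorem 2.1]{DA-RO:2009} and used as a black box, so your attempt must be measured against their argument. Your Steps 1--2 set up the right framework (log-Lipschitz modulus plus Osgood), and you correctly identify the obstruction in Step 3, but both the modulus you claim and the mechanism you propose to overcome the obstruction are wrong in ways that prevent the proof from closing. First, the modulus of continuity of a $W^{\frac{d+2}{2},2}=H^{d/2+1}$ field is $|x-y|\bigl(1+\bigl|\log|x-y|\bigr|\bigr)^{1/2}$, not $|x-y|\bigl(1+\bigl|\log|x-y|\bigr|\bigr)$; the square root is not cosmetic. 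With the full-log modulus the Osgood primitive is $\Phi(r)=\log(1+|\log r|)$ and the endgame described below cannot be won; with the square-root modulus it is $\Phi(r)\simeq(1+|\log r|)^{1/2}$, which is exactly what is needed. Second, the Brezis--Gallouet--Wainger route you sketch does not exist: $W^{\frac{d-2}{2},2}$ sits two derivatives below the critical space and does not even embed into $C^0$ (for $d=2$ it is just $L^2$), so no inequality can control a log-Lipschitz seminorm by that norm times a logarithm of the critical norm.

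The actual role of the hypothesis $u\in L^p(0,T;W^{\frac{d-2}{2},2})$, $p>1$, is different. By interpolation, $\|u\|_{\infty}\le C\|u\|_{\frac{d-2}{2},2}^{\theta}\|u\|_{\frac{d+2}{2},2}^{1-\theta}$ for any $\theta<\tfrac12$, and H\"older in time (this is precisely where $p>1$ enters) gives $\int_0^{t_0}\|u(s)\|_{\infty}\,ds\le Ct_0^{\alpha}$ with $\alpha=\theta(1-1/p)>0$; hence two trajectories from the same point satisfy $\eta(t_0):=|X_1(t_0)-X_2(t_0)|\le Ct_0^{\alpha}$. The quantitative Osgood inequality run forward from $t_0$ to $t$ with the square-root modulus then yields
\[
\sqrt{1-\log\eta(t)}\;\ge\;\sqrt{1-\log\eta(t_0)}-C\Bigl(\int_0^t s\,\|u(s)\|^2_{\frac{d+2}{2},2}\,ds\Bigr)^{1/2}\sqrt{\log(t/t_0)}\;\ge\;\bigl(\sqrt{\alpha}-C\varepsilon(t)\bigr)\sqrt{\log(1/t_0)}-C,
\]
where $\varepsilon(t):=\|\sqrt{s}\,u\|_{L^2(0,t;W^{\frac{d+2}{2},2})}\to0$ as $t\to0$. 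Choosing $t_*$ so small that $C\varepsilon(t_*)<\sqrt{\alpha}$ and letting $t_0\to0^+$ forces $\eta\equiv0$ on $[0,t_*]$; on $[t_*,T]$ the standard Osgood lemma applies directly, since $\|u(s)\|_{\frac{d+2}{2},2}\le f(s)/\sqrt{s}\in L^1(t_*,T)$ by Cauchy--Schwarz. This competition between the two $\sqrt{\log(1/t_0)}$ terms --- won because the weighted $L^2$ norm is small near $t=0$ and because the initial H\"older closeness supplies the coefficient $\sqrt{\alpha}$ --- is the missing idea in your proposal; note that it collapses entirely if one uses the full-log modulus, since $\log\log(1/t_0)$ cannot beat $\sqrt{\log(1/t_0)}$.
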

The latter result shows that certain (slightly weaker than
$C^{1,\gamma}$) results of Sobolev space-regularity can be used to
obtain uniqueness for~\eqref{eq:traj}. On the other hand, the
$W^{2,2}(\R^2)$ regularity for fluid with shear-dependent viscosity is
another non-trivial task (while in 3D proving $u\in W^{5/2,2}(\R^3)$,
seems at the moment out of sight).  Some recent results (in the
stationary case) for second-order space-derivatives appeared
in~\cite{BEI:2009,Ber2009a,CG2008} even if the square integrability of
second order derivatives is not reached in general domains, or if
certain limitations on the smallness of the force are not
satisfied. For the non-stationary case, we recall the result in the
space periodic setting (obtained uniformly in $\delta\geq0$)
from~\cite{BE-DI-RU:2010,DI-RU:2005}.

We also point out that one of the main technical obstructions is
represented by the pressure and the associated divergence-free
constraint. In the case of the p-Laplacian systems, in fact, the recent
results in Beir\~ao da Veiga and Crispo~\cite{BeiC2012} show that
$u\in W^{2,q}(\Omega)$, for arbitrary $q$, if $f$ is smooth, and under
certain restrictions on the range of $p\in(1,2)$. These latter results
are proved in the stationary case, they have no counterpart for the
$p$-Stokes system, and most likely they can be adapted to the
time-dependent case.

We point out that in the case of non-Newtonian fluids many features of
the problem are critical: The type of boundary conditions, the range
of $p$, and if the parameter $\delta$ is strictly larger than zero.
We will discuss later on some of the technical issues of the problem
and we will explain why we have to reduce to the 2D case with
$\nu_0,\delta>0$.  We start by considering the easier case of the
periodic setting where $\Omega$ is the flat 2D~torus
$\T^2:=\R^2/2\pi\Z$ and we will prove the following result.
\begin{proposition} 
  \label{prop:unicita-caso-periodico} 
  Let $\nu_0>0$, $\delta \geq 0$, and $p\in(1,2]$.  Let be given
  $u_0\in L^2(\T^2)$ such that $\div u_0=0$ and $f\in
  L^2(0,T;L^2(\T^2))$. Then, weak solutions
  to~\eqref{eq:genNewModel-m} satisfy $\sqrt{t}\,u\in
  L^2(0,T;W^{2,2}(\T^2))$ and hence problem~\eqref{eq:traj} admits a
  unique solution.
\end{proposition}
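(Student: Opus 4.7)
The plan is to verify the hypotheses of Theorem~\ref{thm:Dashti-Robinson} with $d=2$, which in this case reduce to $u\in L^p(0,T;L^2(\T^2))$ for some $p>1$ and $\sqrt{t}\,u\in L^2(0,T;W^{2,2}(\T^2))$. The first follows directly from the standard energy estimate obtained by testing the Galerkin approximations of~\eqref{eq:genNewModel-m} with $u$ itself, yielding $u\in L^\infty(0,T;L^2)\cap L^2(0,T;W^{1,2})$ (the non-Newtonian term contributes nonnegatively, since $S(\dom u)\cdot \dom u\ge 0$). The heart of the proof is therefore to establish a weighted $H^2$-bound: an estimate of the form $\int_0^T t\,\|\Delta u(t)\|_2^{2}\,dt<\infty$ suffices, because on the torus $\|u\|_{W^{2,2}}^2\sim \|u\|_2^2+\|\Delta u\|_2^2$ and $\|u\|_2$ is already controlled.

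To obtain this weighted estimate, I would formally test~\eqref{eq:genNewMod1-m} against $-\Delta u$. In the periodic setting this is legitimate because $\Delta u$ is divergence-free, so the pressure term vanishes after integration by parts. Integrating the non-Newtonian contribution by parts twice and using the symmetry of $S$ reduces it to $\int DS(\dom u)[\nabla\dom u,\nabla \dom u]\,dx$; a pointwise computation shows that for any symmetric $\xi$ and $p\in(1,2]$ one has $DS(A)[\xi,\xi]\ge (p-1)(\delta+|A|)^{p-2}|\xi|^{2}\ge 0$, so this term has the correct sign irrespective of $\delta\ge 0$. The convective term is handled in 2D by Ladyzhenskaya/Gagliardo--Nirenberg:
\begin{equation*}
\Bigl|\int (u\cdot\nabla)u\cdot\Delta u\,dx\Bigr|
\le \|u\|_4\|\nabla u\|_4\|\Delta u\|_2
\le C\|u\|_2^{1/2}\|\nabla u\|_2\|\Delta u\|_2^{3/2},
\end{equation*}
which, by Young's inequality, is absorbed into $\nu_0\|\Delta u\|_2^2$ at the cost of a term $C\|u\|_2^{2}\|\nabla u\|_2^{2}\cdot\|\nabla u\|_2^{2}$.

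The resulting differential inequality $\tfrac{d}{dt}\|\nabla u\|_2^2+\nu_0\|\Delta u\|_2^2\le C\|u\|_2^2\|\nabla u\|_2^2\cdot\|\nabla u\|_2^2+C\|f\|_2^2$ is not integrable in time without an $H^1$ initial datum, so I would multiply by $t$ to obtain
\begin{equation*}
\frac{d}{dt}\bigl(t\|\nabla u\|_2^2\bigr)+\nu_0 t\,\|\Delta u\|_2^2
\le \|\nabla u\|_2^2+C\,\|u\|_2^2\|\nabla u\|_2^2\cdot\bigl(t\|\nabla u\|_2^2\bigr)+Ct\,\|f\|_2^2,
\end{equation*}
and then invoke Gronwall, using that $\|u\|_2^2\|\nabla u\|_2^2\in L^1(0,T)$ from the energy estimate. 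This yields $t\|\nabla u\|_2^2\in L^\infty(0,T)$ and $t\|\Delta u\|_2^2\in L^1(0,T)$, hence the desired $\sqrt{t}\,u\in L^2(0,T;W^{2,2})$, after which Theorem~\ref{thm:Dashti-Robinson} concludes uniqueness for~\eqref{eq:traj}.

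I expect the main technical obstacle to be the rigorous justification of the test with $-\Delta u$ when $\delta=0$ and $p<2$, since $DS(A)$ is singular at $A=0$ and the pointwise monotonicity inequality is only formal there. I would handle this by passing through Galerkin approximations (where the computations are legitimate because finite-dimensional projections are smooth) and by noting that the only inequality actually used, the nonnegativity of the weighted bilinear form, is preserved by lower semicontinuity in the limit; alternatively, one can regularize $S$ by replacing $\delta$ with $\delta+\varepsilon$, derive the estimates uniformly in $\varepsilon$, and let $\varepsilon\downarrow 0$.
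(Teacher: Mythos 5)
Your proof is correct and follows essentially the same route as the paper: testing (on the Galerkin level) with $-t\,\Delta u$, discarding the non-negative stress-tensor contribution $t\,\mathcal{I}(u)$, and passing to the limit via uniqueness of the weak solution and lower semicontinuity of the norm. The only difference is the convective term, where the paper invokes the exact 2D space-periodic cancellation $\int_{\T^2}(u\cdot\nabla)\,u\cdot\Delta u\,dx=0$ and so needs no Gronwall argument, while your Ladyzhenskaya--Gronwall estimate is a correct (and slightly more robust) alternative.
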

We emphasize that the assumption $\nu_0 > 0$ is crucial in our
method. When $\nu_0=0$ it is possible to prove a regularity result
that, although it is not useful to get an application of
Theorem~\ref{thm:Dashti-Robinson}, seems interesting by itself.  See
Prop.~\ref{eq:prop-nuzero}, cf.~Kost~\cite{Kos2010}.

In the Dirichlet case the problem of regularity is more delicate. We
will consider problem~\eqref{eq:genNewModel-m} in a domain with flat
boundary. We first prove a regularity result, by using techniques
similar to those used in~\cite{CR:2009} and formerly introduced, for
the case $p>2$, in~\cite{Bei2005}. With smooth data, we have the
following result.
\begin{proposition} 
  \label{lem:main-theo-with-delta-with-nonlin}
  Let $\delta>0$, $\nu_0>0$, $p\in \big[\frac{3}{2}, 2\big]$,
  $u_0\in W^{2,2}(\Omega)\cap W^{1,2}_0(\Omega)$ with $\div u_0=0$, and
  $f\in W^{1,2}(0,T;L^2(\Omega))$. Then,  weak solutions to
  Problem~\eqref{eq:genNewModel-m}-\eqref{eq:bound-cond-fake} satisfy
 \begin{equation}
    \label{eq:regularity-maximal}
    \begin{aligned} \|u_t\|_{L^{\infty}(0, T;L^2)} &+ \|\nabla
      u \|_{L^{\infty}(0, T; L^2(\Omega))}+\|\nabla \pi \|_{L^2(0,T;L^2)}   
      \\
      & + \|\nabla u_t\|_{L^2(0,T;L^2)} + \|D^2
      u\|_{L^2(0,T;L^2)} \leq C,
    \end{aligned}
  \end{equation}
  where $C$ depends on $p$, $\delta$, $\nu_0$, $\nu_1$,
  $\|f\|_{W^{1,2}(0, T;L^2)}$, $\|u_0\|_{2,2}$, $T$, and
  $\Omega$.  
\end{proposition}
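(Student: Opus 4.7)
The overall strategy is twofold: first, obtain time-derivative bounds by differentiating the momentum equation in $t$ and testing with $u_t$; then bootstrap these into $H^2$ and pressure bounds by viewing the stationary form of the equation as a (perturbed) Stokes system with $L^2$ forcing.

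I would begin with a suitable Galerkin or Faedo--Galerkin approximation, in which the formal manipulations below are legitimate. Evaluating~\eqref{eq:genNewMod1-m} at $t=0$ and using $u_0\in W^{2,2}(\Omega)\cap W^{1,2}_0(\Omega)$ with $\div u_0=0$, together with the boundedness of $S$ and $S'$ near $|\dom u_0|$ (which holds thanks to $\delta>0$), one obtains $\|u_t(0)\|_2\le C$. This serves as the initial datum for the time-differentiated problem.

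Next, differentiate~\eqref{eq:genNewMod1-m} in $t$ and test with $u_t$. The pressure term drops by incompressibility and $u_t|_\Gamma=0$; the transport term $((u\cdot\nabla)u_t,u_t)$ vanishes for the same reasons; and the monotonicity of $S$ yields $\int S'(\dom u)\dom u_t:\dom u_t\,dx\ge 0$. One is left with
\begin{equation*}
\tfrac{1}{2}\tfrac{d}{dt}\|u_t\|_2^2+\nu_0\|\nabla u_t\|_2^2 \le |((u_t\cdot\nabla)u,u_t)|+\|f_t\|_2\|u_t\|_2.
\end{equation*}
In 2D, Ladyzhenskaya's inequality bounds the convective term by $\varepsilon\|\nabla u_t\|_2^2+C_\varepsilon\|\nabla u\|_2^2\|u_t\|_2^2$; after absorbing the first summand and invoking Gronwall's lemma with $\|\nabla u\|_2^2\in L^1(0,T)$ from the basic energy estimate, one obtains $u_t\in L^\infty(0,T;L^2)$ and $\nabla u_t\in L^2(0,T;L^2)$.

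At a.e.\ $t$ the PDE can now be recast as the stationary problem
\begin{equation*}
-\nu_0\Delta u-\nu_1\div S(\dom u)+\nabla\pi = f-u_t-(u\cdot\nabla)u,
\end{equation*}
whose right-hand side lies in $L^\infty(0,T;L^2)$ once $(u\cdot\nabla)u$ is controlled via the 2D embedding $W^{2,2}\hookrightarrow L^\infty$ in a short bootstrap. Invoking the flat-boundary regularity analysis of~\cite{CR:2009,Bei2005}, which exploits $\nu_0>0$ and $\delta>0$ to make the perturbed operator non-degenerately elliptic, yields $\|D^2u(t)\|_2+\|\nabla\pi(t)\|_2\le C\|\mathrm{RHS}(t)\|_2$; integrating in $t$ produces the last two bounds in~\eqref{eq:regularity-maximal}, and $\|\nabla u\|_{L^\infty L^2}$ follows from $u\in L^\infty(0,T;H^2)$. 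The main obstacle is precisely this stationary regularity step: the nonlinear perturbation $\div S(\dom u)$ is degenerately elliptic for $p<2$, and the threshold $p\ge 3/2$ is what permits one to control it via interpolation between the $W^{1,q}$ norms of $\dom u$ available in 2D from $H^2$ and the non-degenerate leading part $\nu_0\Delta u$. A secondary difficulty is justifying the boundary integrations by parts implicit in the flat-boundary arguments, which rely on the commutation of tangential derivatives with the equation and on recovering normal second derivatives from incompressibility.
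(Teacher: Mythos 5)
Your overall route is the same as the paper's (time-derivative estimates first, then the flat-boundary ``stationary'' regularity with $(u\cdot\nabla)u$ treated as part of the forcing), but there is a genuine gap at the point where you need $\nabla u\in L^\infty(0,T;L^2)$. Testing the time-differentiated equation with $u_t$ gives, as you say, $u_t\in L^\infty(0,T;L^2)$ and $\nabla u_t\in L^2(0,T;L^2)$, but it gives no $L^\infty$-in-time control of $\|\nabla u\|_2$; the energy estimate only gives $\nabla u\in L^2(0,T;L^2)$. You then claim $\|\nabla u\|_{L^\infty L^2}$ ``follows from $u\in L^\infty(0,T;H^2)$,'' but you never establish $L^\infty$-in-time $H^2$ regularity (your elliptic step, integrated in $t$, only yields $D^2u\in L^2(0,T;L^2)$), and your proposed bootstrap via $W^{2,2}\hookrightarrow L^\infty$ to put $(u\cdot\nabla)u$ in $L^\infty(0,T;L^2)$ is circular: it presupposes the very $H^2$ bound you are trying to prove. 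Moreover, the non-circular absorption estimate $\|(u\cdot\nabla)u\|_2\le c_\varepsilon\|\nabla u\|_2^2+\varepsilon\|D^2u\|_2$ produces, after squaring and integrating, the quantity $\int_0^T\|\nabla u\|_2^4\,dt$, which is not finite from the energy estimate alone; one needs $\nabla u\in L^\infty(0,T;L^2)$ (or at least $L^4(0,T;L^2)$) \emph{before} the elliptic step to close it. Note also that $\|\nabla u\|_{L^\infty(0,T;L^2)}\le C$ is itself one of the assertions of \eqref{eq:regularity-maximal}, so it cannot be left implicit.

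The paper closes this by an additional, and essential, step that you omit: testing the \emph{undifferentiated} equation with $u_t$ and using the identity $\langle S(\dom u),\dom u_t\rangle=\frac{d}{dt}\mathcal{M}(u)$ with $\mathcal{M}(u)=\int_\Omega M(|\dom u|)\,dx\ge0$, which yields a differential inequality for $\nu_0\|\nabla u\|_2^2+\nu_1\mathcal{M}(u)$; summing this with your time-differentiated inequality and applying Gronwall gives simultaneously $u_t\in L^\infty L^2$, $\nabla u_t\in L^2L^2$, \emph{and} $\nabla u\in L^\infty L^2$ (this is Lemma 4.1 of the paper). With that in hand your remaining steps go through. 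A secondary, less serious, inaccuracy: the threshold $p\ge\frac32$ does not enter through an interpolation between $W^{1,q}$ norms; it is the purely algebraic condition ensuring that the coefficient $\alpha_1=\nu_0+\frac{\nu_1}{2}(\delta+|\dom u|)^{p-2}+\nu_1(p-2)(\delta+|\dom u|)^{p-3}|\dom u|^{-1}(\dom u)_{12}^2$ of $\partial_{22}^2u_1$ satisfies $\alpha_1\ge\nu_0>0$, so that the equation can be solved pointwise for the missing normal second derivative.
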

Some hypotheses can be relaxed, since the time regularity is
unnecessary for the proof of uniqueness of particle trajectories, but
the arguments used to prove
Proposition~\ref{lem:main-theo-with-delta-with-nonlin} will play a
fundamental role to demonstrate our main uniqueness criterion for the
problem~\eqref{eq:traj}.  The main result of this paper reads as
follows:
\begin{theorem} 
  \label{thm:teoUniq} 
  Let $\delta>0$, $\nu_0>0$, $p\in\big[\frac{3}{2},2]$, $u_0\in
  L^2(\Omega)$ with $\div u_0=0$, such that $(u_0\cdot
  n)_{|\Gamma}=0$, and $f\in L^2(0,T;L^2(\Omega))$. Then, weak
  solutions to~\eqref{eq:genNewModel-m}-\eqref{eq:bound-cond-fake}
  satisfy $\sqrt{t}\,u\in L^2(0,T;W^{2,2}(\Omega))$, and
  consequently~\eqref{eq:traj} admits a unique solution.
\end{theorem}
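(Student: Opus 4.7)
The plan is to deduce uniqueness of~\eqref{eq:traj} from Theorem~\ref{thm:Dashti-Robinson} applied with $d=2$. In that dimension $W^{(d-2)/2,2}(\Omega)=L^2(\Omega)$ and $W^{(d+2)/2,2}(\Omega)=W^{2,2}(\Omega)$, so the first hypothesis $u\in L^p(0,T;L^2(\Omega))$ follows at once from the basic energy inequality for any weak solution with $u_0\in L^2(\Omega)$, and only the weighted condition $\sqrt{t}\,u\in L^2(0,T;W^{2,2}(\Omega))$ has to be verified.

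I would first approximate the initial datum by a sequence $u_0^{\varepsilon}\in W^{2,2}(\Omega)\cap W^{1,2}_0(\Omega)$ with $\div u_0^\varepsilon=0$ and $u_0^\varepsilon\to u_0$ in $L^2(\Omega)$, and let $u^\varepsilon$ denote the corresponding solution of~\eqref{eq:genNewModel-m}-\eqref{eq:bound-cond-fake}. For each $\varepsilon$ Proposition~\ref{lem:main-theo-with-delta-with-nonlin} provides the full regularity~\eqref{eq:regularity-maximal}, but with a constant depending on $\|u_0^\varepsilon\|_{2,2}$, which diverges as $\varepsilon\to 0$. The idea is then to revisit the identities leading to~\eqref{eq:regularity-maximal} after inserting the scalar weight~$t$: testing the equation with $t\,u^\varepsilon_t$ instead of $u^\varepsilon_t$ and integrating by parts in time every contribution of the form $t\,\frac{d}{dt}\Psi(u^\varepsilon)$ produces a boundary term $T\,\Psi(u^\varepsilon(T))$, which is absorbed on the good side, plus a bulk remainder $\int_0^T\Psi(u^\varepsilon)\,dt$ controlled by the standard energy inequality in terms of $\|u_0^\varepsilon\|_{L^2}$ and $\|f\|_{L^2(L^2)}$ alone. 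This should yield bounds uniform in $\varepsilon$ of the schematic form
\begin{equation*}
  \|\sqrt{t}\,u^\varepsilon_t\|_{L^2(L^2)}+\|\sqrt{t}\,\nabla u^\varepsilon\|_{L^\infty(L^2)}+\|\sqrt{t}\,\nabla u^\varepsilon_t\|_{L^2(L^2)}\le C,
\end{equation*}
with $C$ depending only on the $L^2$ norm of $u_0$, on $\|f\|_{L^2(L^2)}$, $T$, and the structural constants.

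To pass from these first-order weighted bounds to the second-order spatial regularity, I would rewrite~\eqref{eq:genNewMod1-m} as a linear Stokes problem with right-hand side $\sqrt{t}\,\bigl(f-u^\varepsilon_t-(u^\varepsilon\cdot\nabla)u^\varepsilon+\nu_1\div S(\dom u^\varepsilon)\bigr)$ and invoke $W^{2,2}$-regularity for the Stokes system on the flat-boundary domain considered in Proposition~\ref{lem:main-theo-with-delta-with-nonlin}. The term $\div S(\dom u^\varepsilon)$ is controlled in $L^2$ by combining the weighted estimate on $\nabla u^\varepsilon_t$ with the monotonicity and ellipticity of $S$ (where the assumptions $\delta>0$ and $p\in[3/2,2]$ are used precisely to avoid degeneracy), while the convective term is handled by a two-dimensional Gagliardo-Nirenberg interpolation together with the uniform $L^\infty(L^2)\cap L^2(W^{1,2})$ bounds on $u^\varepsilon$. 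A standard weak-compactness argument in $\varepsilon$ then transfers the resulting estimate $\sqrt{t}\,u^\varepsilon\in L^2(0,T;W^{2,2}(\Omega))$ to the weak solution $u$ of the limit problem, and Theorem~\ref{thm:Dashti-Robinson} immediately yields the claimed uniqueness. The main obstacle I anticipate is reproducing, \emph{uniformly in~$\varepsilon$} and in the presence of the extra $t$-weight, the delicate coupling between the $p$-type stress tensor, the second-order derivatives, and the divergence-free constraint already at the heart of the proof of Proposition~\ref{lem:main-theo-with-delta-with-nonlin}; the remaining ingredients (Gr\"onwall, Sobolev embedding in 2D, weak lower semicontinuity) are standard.
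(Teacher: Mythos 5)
Your overall strategy --- reduce to Theorem~\ref{thm:Dashti-Robinson} with $d=2$ and establish the single missing hypothesis $\sqrt{t}\,u\in L^2(0,T;W^{2,2}(\Omega))$ by time-weighted energy estimates --- is the same as the paper's, and your weighted test with $t\,u_t$ is indeed one of the two weighted identities the paper uses (the other being the test with $-t\,\D_{11}^2u$). However, there is a genuine gap at the decisive step, the passage from first-order weighted bounds to $\sqrt{t}\,D^2u\in L^2(\Omega_T)$. You propose to treat $\nu_1\div S(\dom u)$ as part of the right-hand side of a linear Stokes problem and to invoke Stokes $W^{2,2}$-regularity, claiming that $\div S(\dom u^\varepsilon)$ is controlled in $L^2$ via the weighted estimate on $\nabla u^\varepsilon_t$ and the ellipticity of $S$. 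This does not work: $\div S(\dom u)$ contains second space derivatives of $u$ with coefficients of size $\delta^{p-2}$, so any $L^2$ bound on it costs $C\nu_1\delta^{p-2}\|D^2u\|_2$, and the Stokes estimate then returns $\|D^2u\|_2\le C(\dots)+C\nu_1\delta^{p-2}\|D^2u\|_2$, which cannot be absorbed without a smallness condition on $\nu_1\delta^{p-2}$ that the theorem does not assume; and no bound on $\nabla u_t$ gives you $\div S(\dom u)\in L^2$ separately from $\nu_0\Delta u-\nabla\pi$ by comparison. Disentangling the Laplacian, the nonlinear stress, and the pressure is precisely the hard point, and the paper resolves it by exploiting the flat boundary: tangential second derivatives and $\mathcal{I}_1(u)$ come from testing with $-t\,\D_{11}^2u$ (no boundary terms in the periodic tangential direction), $\D_1\pi$ comes from the Ne\v{c}as lemma (Lemma~\ref{lem:lemma-2.6-crispo}) applied to the tangentially differentiated equation, and the one remaining derivative $\D^2_{22}u_1$ is recovered \emph{pointwise} by solving equation~\eqref{eq:estimate-double-der-u1} for it and dividing by the coefficient $\alpha_1\ge\nu_0>0$ --- this last inequality is exactly where $p\ge\tfrac32$ enters, a role your proposal does not identify.

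Two further remarks. First, your claimed uniform bound on $\|\sqrt{t}\,\nabla u^\varepsilon_t\|_{L^2(L^2)}$ in terms of $\|u_0\|_2$ alone is doubtful: obtaining it requires differentiating the equation in time and testing with $t\,u_t$, which produces the unweighted bulk term $\int_0^T\|u_t\|_2^2\,dt$, not available for merely $L^2$ initial data. The paper deliberately avoids this by using only $\int_0^t s\|u_t(s)\|_2^2\,ds$, obtained from the direct test with $t\,u_t$. Second, your approximation of $u_0$ by smooth data is an unnecessary extra layer; the paper performs the weighted estimates directly on the Galerkin approximations, and uniqueness of the weak solution (Theorem~\ref{thm:existence-uniqueness-weak-solutions}) together with lower semicontinuity transfers the bound to $u$. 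The closing steps you list (Gagliardo--Nirenberg for the convective term as in~\eqref{eq:nonlin-stima-1}, Gr\"onwall on $[\lambda,T]$ with $\lambda\to0^+$, weak compactness) match the paper, but as written the proposal does not prove the theorem.
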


\noindent\textbf{Plan of the paper.}  In Section~\ref{sec:preliminaries}
we introduce the notation and we give some preliminary results. In
Section~\ref{periodic-case-section}, we consider the space-periodic
setting and we prove Proposition~\ref{prop:unicita-caso-periodico}.
Thereafter, in Section~\ref{sec:time-regularity}, we prove a
preliminary space-time regularity result for the solutions
of~\eqref{eq:genNewModel-m}-\eqref{eq:bound-cond-fake} and then we
demonstrate Proposition~\ref{lem:main-theo-with-delta-with-nonlin}.
Finally, in Section~\ref{sec:uniqueness}, we give the proof of
Theorem~\ref{thm:teoUniq}.
\section{Preliminaries and basic results}
\label{sec:preliminaries} 
Let us introduce the notation related especially to the
problem~\eqref{eq:genNewModel-m} with Dirichlet boundary conditions.
The needed assumptions or changes for the space periodic case are
specified in Section~\ref{periodic-case-section}.

Throughout the article, when $\Omega$ is a bounded domain with
boundary, it will be a two dimensional cube $\Omega=]-1,1[^2$ and we
denote by $\Gamma$ the two opposite sides in the $x_2$ direction
\begin{equation*}
  \Gamma:=\{x=(x_1, x_2) \colon |x_1|<1,\, x_2=-1 \}\cup
  \{x=(x_1, x_2) \colon |x_1|<1,\, x_2=1 \},
\end{equation*}
We use the following boundary conditions
\begin{equation}
  \label{eq:boundary-cond}
  \left\{\begin{array}{l}
      u_{\vert_\Gamma}=0,
      \\
      u\quad  \textrm{is}\quad  
      2\textrm{-periodic w.r.t } x_1.
    \end{array}\right.
\end{equation}
Here, $x_1$ represents the tangential direction to $\Gamma$ and this
idealized setting of a ``periodic strip'' corresponds to the
half-space, but without complications at infinity. 

Given $q\geq 1$, by $L^q(\Omega)$, we indicate the usual Lebesgue
space with norm $\|\cdot\|_q$.  Moreover, by $W^{k,q}(\Omega)$, $k$ a
non-negative integer and $q$ as before, we denote the usual Sobolev
space with norm $\|\cdot\|_{k,q}$. We also denote by
$W^{1,q}_0(\Omega)$ the closure of $C_0^{\infty}(\Omega)$ in
$W^{1,q}(\Omega)$ and by $W^{-1, q'}(\Omega)$, $q' = q/(q -1)$, the
dual of $W^{1, q}_0(\Omega)$ with norm $\|\cdot\|_{-1, q'}$.  Let $X$ be
a real Banach space with norm $\|\cdot\|_X$. We will use the customary
spaces $W^{k,q}(0, T;X)$, with norm denoted by
$\|\cdot\|_{W^{k,q}(0,T;X)}$, recalling that $W^{0,q}(0, T;X)=L^{q}(0,
T;X)$. We will also use the notation $\Omega_T:=\Omega\times(0,T)$ and
we will not distinguish between scalar and vector fields and the symbol
$\langle\, \cdot\,,\, \cdot\,\rangle$ will indicate a duality pairing.
Here and in the sequel, we denote by $C$ positive constants that may
assume different values, even in the same equation. We also define
\begin{equation*}
  V_q :=\big\{  v\in W^{1,q}(\Omega):\  \nabla \cdot v = 0,\,
  v_{\vert_\Gamma}= 0,\, v\ \textrm{is}\ 2\textrm{-periodic w.r.t. }x_1\big\}, 
\end{equation*}
with dual space $V'_q$.  Since the extra-stress tensor $S$ is a
function not of the gradient, but of the deformation tensor (in order
to have frame invariant equations) we recall a Korn-type inequality,
see~\cite{CR:2009}
\begin{lemma} 
  \label{lem:lemma-2.2-crispo} There exists a positive constant
  $C=C(\Omega)$ such that
  \begin{equation*}
    \|v\|_q + \|\nabla v \|_q \leq C \|\dom v\|_q, \quad  
    \textrm{for each}\quad  v\in V_q.
  \end{equation*}
\end{lemma}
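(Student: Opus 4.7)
The plan is to combine a Poincar\'e inequality---available because $v$ vanishes on $\Gamma$---with Korn's second inequality on $W^{1,q}(\Omega)$, and to absorb the resulting lower-order term via a rigidity argument that exploits the boundary and periodicity conditions.

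First, for $v \in V_q$, the Dirichlet condition on the horizontal sides $\{x_2 = \pm 1\}$ allows a one-dimensional Poincar\'e inequality fiberwise in $x_2$, giving $\|v\|_q \le C\|\partial_{x_2} v\|_q \le C\|\nabla v\|_q$. It therefore suffices to show $\|\nabla v\|_q \le C\|\dom v\|_q$ for $v \in V_q$. The starting point is Korn's second inequality, valid on the bounded Lipschitz domain $\Omega$ for any $v \in W^{1,q}(\Omega)$,
\[
\|\nabla v\|_q \le C\bigl(\|\dom v\|_q + \|v\|_q\bigr),
\]
and the task becomes removing the $\|v\|_q$ term when $v$ is restricted to $V_q$.

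To do so I would argue by contradiction: if the claim failed, there would exist a sequence $\{v_n\} \subset V_q$ with $\|v_n\|_{1,q} = 1$ and $\|\dom v_n\|_q \to 0$. The Rellich compact embedding $W^{1,q}(\Omega) \hookrightarrow L^q(\Omega)$ produces a subsequence converging in $L^q$, and Korn's second inequality upgrades this to strong convergence in $W^{1,q}(\Omega)$ to a limit $v$ with $\dom v = 0$. Such $v$ is a rigid motion; writing $v(x) = a + Bx$ with $B$ skew-symmetric, the $2$-periodicity in $x_1$ forces $B = 0$, and then $v_{|\Gamma} = 0$ forces $a = 0$, contradicting $\|v\|_{1,q} = 1$.

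The main obstacle is Korn's second inequality itself for $q \ne 2$: its proof is not elementary and ultimately rests on Calder\'on-Zygmund theory via the identity $\Delta v = 2\,\div(\dom v) - \nabla(\div v)$. For $q = 2$ a direct verification is available: expanding $2|\dom v|^2 = |\nabla v|^2 + \partial_i v_j\,\partial_j v_i$ and integrating by parts twice on the cross term, the boundary contributions vanish thanks to the mixed Dirichlet/periodic conditions and the bulk term vanishes because $\div v = 0$, giving $\|\nabla v\|_2^2 = 2\|\dom v\|_2^2$. For general $q$ I would invoke~\cite{CR:2009}, where the inequality is established precisely for the geometry and boundary conditions considered here, rather than reproving it.
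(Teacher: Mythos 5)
Your argument is correct, but it is worth noting that the paper itself does not prove this lemma at all: it is stated as a known Korn-type inequality and attributed to~\cite{CR:2009}, so there is no internal proof to compare against. What you supply is the standard equivalence-of-norms argument, and it holds up: the fiberwise Poincar\'e inequality in $x_2$ is legitimate because $v$ vanishes on both horizontal sides; Korn's second inequality plus Rellich compactness upgrades the contradiction sequence to strong $W^{1,q}$ convergence; and the rigidity step is clean in two dimensions, since a skew-symmetric $B$ gives $v_2 = a_2 + b x_1$, which is $2$-periodic in $x_1$ only if $b=0$, after which the trace condition kills the constant. The limit function inherits membership in $V_q$ by continuity of the trace and of the distributional constraints, so the rigid-motion classification applies. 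Two small caveats: first, the whole argument (and the lemma as used in the paper) implicitly requires $1<q<\infty$, since Korn's second inequality and the underlying Calder\'on--Zygmund/Ne\v{c}as machinery fail at the endpoints $q=1,\infty$; it would be worth saying so explicitly. Second, your honest admission that the real analytic content is Korn's second inequality for general $q$ is exactly right --- your proof reduces the lemma to that classical result plus an elementary kernel computation, which is precisely the reduction carried out in~\cite{CR:2009}; your direct computation $\|\nabla v\|_2^2 = 2\|\dom v\|_2^2$ for $q=2$, using $\div v = 0$ and the cancellation of boundary terms under the mixed Dirichlet/periodic conditions, is also correct and is the one case where the lemma is genuinely elementary.
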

We write $f \simeq g$, if there exist $c_0,\,c_1>0$ such that $c_0 f
\leq g \leq c_1 f$. When considering the tensor
$S(D)=(\delta+|D^{sym}|)^{p-2}D^{sym}$, introduced
in~\eqref{eq:extra-stress-tensor} (where $D$ is a second order tensor
and $D^{sym}$ its symmetric part) it can be easily checked that for
any second order tensor $C$, the following relations are verified
\begin{subequations}
  \begin{gather}
    \sum_{i,j,k,l=1}^2\D_{kl} {S}_{ij}( D) C_{ij} C_{kl}
    \geq (p-1)(\delta + |D^{sym}|)^{p-2}|{C}|^2, \label{eq:ass-1a}
    \\
    \left| \D_{kl} {S}_{ij}( D)\right| \leq (3-p)(\delta +
    |D^{sym}|)^{p-2}.\label{eq:ass-1b}
  \end{gather}
\end{subequations}
The symbol $\D_{kl} {S}_{ij}$ represents the partial derivative $\D
S_{ij}/\D D_{kl}$ of the $(i,j)$-component of $S$ with respect to the
$(k, l)$-component of the underlying space of $2\X 2$
matrices. Monotonicity and growth properties of $S$ are characterized
in the following standard lemma.
\begin{lemma} 
  \label{eq:lemma-S} Assume that $p\in (1,\infty)$ and
  $\delta\in [0,\infty)$.  Then, for all $A, B \in \R^{2\X2}$ there
  holds
  \begin{subequations}
    \begin{gather*}
      \big( S(A) - S(B) \big) \cdot (A^{sym}- B^{sym}) \simeq (\delta +
      |B^{sym}| + |A^{sym}|)^{p-2} |A^{sym} - B^{sym}|^2,
      \\
      |S(A) - S( B)| \simeq (\delta + |B^{sym}| + |A^{sym}|)^{p-2}
      |A^{sym} - B^{sym}|,
    \end{gather*}
  \end{subequations}
  where the constants $c_0,\,c_1>0$ depend only on $p$, and are
  independent of $\delta \geq 0$.
\end{lemma}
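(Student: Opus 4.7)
The plan is to reduce both equivalences to a single pointwise integration argument based on the structural bounds \eqref{eq:ass-1a}--\eqref{eq:ass-1b}. Since $S(D)$ depends only on $D^{sym}$, writing $A':=A^{sym}$ and $B':=B^{sym}$ it suffices to establish both statements in the case of symmetric tensors $A',B'$. For such tensors the fundamental theorem of calculus gives
\begin{equation*}
  S(A') - S(B') = \int_0^1 DS(B'_s)[A' - B']\,ds,\qquad B'_s := B' + s(A' - B'),
\end{equation*}
and $B'_s$ remains symmetric, so $(B'_s)^{sym}=B'_s$ and the hypotheses of \eqref{eq:ass-1a}--\eqref{eq:ass-1b} apply directly with $D^{sym}$ replaced by $B'_s$.

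Next I would apply the pointwise bounds termwise under the integral sign. From \eqref{eq:ass-1b} one obtains
\begin{equation*}
  |S(A') - S(B')| \leq (3-p)\,|A' - B'|\int_0^1 (\delta + |B'_s|)^{p-2}\,ds,
\end{equation*}
while \eqref{eq:ass-1a} yields
\begin{equation*}
  (S(A') - S(B'))\cdot(A' - B') \geq (p-1)\,|A' - B'|^2\int_0^1 (\delta + |B'_s|)^{p-2}\,ds.
\end{equation*}
These two bounds already provide one direction in each $\simeq$. The reverse directions then follow from the inequality $(S(A')-S(B'))\cdot(A'-B') \leq |S(A')-S(B')|\,|A'-B'|$, which furnishes both the upper bound for the scalar product and, after dividing by $|A'-B'|$, the lower bound for $|S(A')-S(B')|$.

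Thus the whole lemma reduces to the integral equivalence
\begin{equation*}
  \int_0^1 (\delta + |B'_s|)^{p-2}\,ds \simeq (\delta + |A'| + |B'|)^{p-2},
\end{equation*}
with constants depending only on $p$, and in particular independent of $\delta\geq 0$. For $p\geq 2$ the estimate $|B'_s|\leq |A'|+|B'|$ gives the upper bound immediately, while the lower bound follows by restricting the integration to a subinterval of $[0,1]$ on which $|B'_s|$ stays comparable to $\max(|A'|,|B'|)$ by a triangle inequality argument. The case $1<p<2$ is the main obstacle: here the lower bound is automatic since $(\delta+\,\cdot\,)^{p-2}$ is decreasing in the norm and $|B'_s|\leq|A'|+|B'|$, but for the upper bound $|B'_s|$ can vanish along the segment $s\mapsto B'_s$, so a case split on the size of $|A'-B'|$ relative to $\delta+|A'|+|B'|$ is required. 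In the regime $|A'-B'|\leq c(\delta+|A'|+|B'|)$ the triangle inequality keeps $\delta+|B'_s|$ bounded below by a fixed multiple of $\delta+|A'|+|B'|$; in the complementary regime the integrand is of the form $|A'-B'|^{p-2}|s-s_0|^{p-2}$ after factoring, which is integrable on $[0,1]$ because $p-2>-1$, and the resulting bound matches $(\delta+|A'|+|B'|)^{p-2}$ up to constants depending only on $p$.
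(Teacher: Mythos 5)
The paper never proves this lemma: it is stated as a ``standard lemma'' and implicitly deferred to the literature (cf.~\cite{BE-DI-RU:2010,DI-RU:2005}), so there is no in-paper argument to compare against. Your proposal is a correct reconstruction of exactly that standard argument, and it is organized efficiently: the reduction to symmetric tensors (legitimate, since $S(A)=S(A^{sym})$ and only $A^{sym},B^{sym}$ appear in the statement), the representation $S(A')-S(B')=\int_0^1 DS(B'_s)[A'-B']\,ds$ along the segment, the pointwise bounds~\eqref{eq:ass-1a}--\eqref{eq:ass-1b} under the integral, and, crucially, the observation that the two resulting one-sided estimates together with Cauchy--Schwarz yield all four inequalities, so that the whole lemma collapses onto the single ``hands-on'' equivalence $\int_0^1(\delta+|B'_s|)^{p-2}\,ds\simeq(\delta+|A'|+|B'|)^{p-2}$. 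Your treatment of that equivalence is also sound: with the threshold $|A'-B'|\leq\tfrac14(\delta+|A'|+|B'|)$ the triangle inequality gives $\delta+|B'_s|\geq\tfrac14(\delta+|A'|+|B'|)$ uniformly in $s$; in the complementary regime one has $|B'_s|\geq|s-s_0|\,|A'-B'|$ (from the quadratic structure of $s\mapsto|B'_s|^2$, with $s_0$ the minimizer over $\R$), the factor $\int_0^1|s-s_0|^{p-2}\,ds\leq 2/(p-1)$ is finite since $p-2>-1$, and $|A'-B'|^{p-2}\leq 4^{2-p}(\delta+|A'|+|B'|)^{p-2}$ because the exponent is negative.

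Two small caveats you should address. First, when $\delta=0$ and $1<p<2$ the tensor $S$ is not differentiable at the origin, so the fundamental-theorem-of-calculus representation requires justification if the segment $B'_s$ crosses $0$: either prove the lemma for $\delta>0$ (your constants are $\delta$-independent) and let $\delta\to0^+$ using continuity of both sides of the asserted equivalences, or note that the singular integrand is absolutely integrable --- precisely your $|s-s_0|^{p-2}$ computation --- so the identity survives by approximation. Second, the bounds~\eqref{eq:ass-1a}--\eqref{eq:ass-1b} carry the constants $(p-1)$ and $(3-p)$ and are therefore only usable for $p<3$, whereas the lemma is stated for $p\in(1,\infty)$; this is immaterial for the paper's range $p\in[1,2]$, but for the full claimed range you must invoke the analogous structural bounds with generic constants $c(p)$, $C(p)$, and also note that passing from the componentwise bound~\eqref{eq:ass-1b} to a bound on $|DS(B'_s)[A'-B']|$ costs a harmless dimensional factor.
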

From the elementary inequality $ a^p \leq a^2b^{p-2} + b^p$, valid for
all $0 \leq a$, $0 < b$, and $p \in [1, 2]$, we get the relation
\begin{equation} 
  \label{eq:2.20}
  \delta^{\frac{p}{2}} + t^{\frac{p}{2}} \simeq (\delta + t)^{\frac{p-2}{2}} t 
  + \delta^{\frac{p}{2}}, \qquad \delta,t\, \geq 0
\end{equation}
with constants depending only on $p$
(see~\cite[Corollary~2.19]{BE-DI-RU:2010}).

Since in the Dirichlet case we need to handle in a different way
tangential and normal derivatives, we denote by $D^2 u$ the set of all
the second-order partial derivatives of $u$. In addition, the symbol
$D^2_\ast u$ denotes all partial derivatives $\D_{ik}^2u_j$, except
for the derivative $\D_{22}^2u_1$, namely
\begin{equation*}
  |D^2_\ast u|^2 := |\D_{22}u_2|^2 + 
  \sum^2_{\fontsize{6}{4}\selectfont 
\left. 
      \begin{array}{c} i,j,k =1
        \\
        (i,k)\ne (2,2)\end{array}\right.}
  |\D_{ik}^2u_j|^2.
\end{equation*}

We introduce the following quantities strictly related to the stress
tensor $S$ and coming naturally in the problem, when using the
techniques introduced in~\cite{BEI:2009,DI-RU:2005,MRR1995}:
\begin{subequations}
  \begin{align}
    \mathcal{I}_1(u)&:= \int_{\Omega} (\delta + |\dom u|)^{p-2}|
    \D_{1}\dom u|^2 dx, \label{eq:I1}
    \\
    \mathcal{I}(u)&:= \int_{\Omega} (\delta + |\dom u|)^{p-2}| \nabla
    \dom u|^2 dx, \label{eq:I}
    \\
    \mathcal{J}(u)&:= \int_{\Omega} (\delta + |\dom u|)^{p-2}|\dom
    u_t|^2dx, \label{eq:J}
  \end{align}
\end{subequations}
where $\mathcal{I}$ is obtained by integration by parts when testing
the extra stress-tensor $S$ with $-\Delta u$ (and this is possible in
the periodic-case); a multiple of $\mathcal{I}_1$ is obtained testing
with $-\partial_{11} u$ and the calculations are possible in the flat
domain; Finally a multiple of $\mathcal{J}$ is obtained testing with
$u_{tt}$ and calculations are valid also in the Dirichlet case, for a
generic domain.

We will also use this classical result, see Ne\v{c}as~\cite{Nec1966}.
\begin{lemma}
  \label{lem:lemma-2.6-crispo} If it holds
  $\nabla g =\div G$, for some $G \in (L^q(\Omega))^{2\times 2}$, for  $1<q<+\infty$ then
 \begin{equation*}
    \Big\|g-\int_\Omega{g}(x)\,dx\Big\|_q\leq c \|G\|_q.
  \end{equation*}
\end{lemma}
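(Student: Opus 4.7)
The plan is to argue by duality, reducing the estimate to the solvability of the divergence equation via Bogovskii's construction. Set $\bar g := |\Omega|^{-1}\int_\Omega g(x)\,dx$ and $h := g - \bar g$, so that $h$ has zero mean and $\nabla h = \nabla g = \div G$ in $\mathcal{D}'(\Omega)$. By $L^q$--$L^{q'}$ duality,
\[
  \|h\|_q = \sup_{\phi \in L^{q'}(\Omega),\, \|\phi\|_{q'}\le 1}\ \int_\Omega h(x)\,\phi(x)\,dx,
\]
and since $h$ has zero mean, replacing $\phi$ by $\phi - |\Omega|^{-1}\int_\Omega \phi\,dx$ does not change the integral, so I may restrict the supremum to mean-zero $\phi \in L^{q'}(\Omega)$.

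For any such $\phi$, I invoke the Bogovskii right-inverse for the divergence: there exists $v \in W^{1,q'}_0(\Omega)^2$ solving $\div v = \phi$ together with the bound $\|\nabla v\|_{q'} \le C_\Omega\,\|\phi\|_{q'}$. Using $v$ as a test field and pairing both sides of the distributional identity $\nabla h = \div G$ against $v$, integration by parts yields
\[
  \int_\Omega h\,\phi\,dx = \int_\Omega h\,\div v\,dx = -\langle \nabla h, v\rangle = -\langle \div G, v\rangle = \sum_{i,j=1}^2 \int_\Omega G_{ij}\,\partial_j v_i\,dx,
\]
which by H\"older is bounded by $\|G\|_q\,\|\nabla v\|_{q'} \le C_\Omega\,\|G\|_q\,\|\phi\|_{q'}$. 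Taking the supremum over admissible $\phi$ gives $\|h\|_q \le C\,\|G\|_q$, which is the desired inequality.

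The main obstacle is the existence of Bogovskii's bounded right-inverse to $\div\colon W^{1,q'}_0(\Omega)^2 \to L^{q'}_0(\Omega)$, i.e., the $L^q$-boundedness of the solution operator for the divergence equation on the ambient domain. This is classical on any bounded Lipschitz domain; on the ``periodic strip'' $\Omega = ]-1,1[^2$ of Section~\ref{sec:preliminaries} one can either appeal to the general Lipschitz theory or reduce to the torus by a tangential-periodic extension and standard cut-off arguments. A minor secondary technicality is the justification of the distributional integration by parts when $g$ itself is not a priori in $L^q$; this is handled by first mollifying $g$ (so that $\nabla(g*\rho_\varepsilon) = \div(G*\rho_\varepsilon)$ with $\|G*\rho_\varepsilon\|_q \le \|G\|_q$), applying the estimate in the smooth case, and passing to the limit via reflexivity of $L^q(\Omega)$ for $1 < q < \infty$.
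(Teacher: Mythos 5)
The paper offers no proof of this lemma at all: it is invoked as a classical result, with Ne\v{c}as~\cite{Nec1966} as the reference, whose original argument runs through the so-called negative norm theorem (equivalence of norms in $W^{k}_p$, proved by localization, boundary flattening, and Fourier-analytic estimates). Your proof is therefore genuinely different, and its core is correct: subtracting the mean, dualizing, solving $\div v=\phi$ with $v\in W^{1,q'}_0(\Omega)^2$ and $\|\nabla v\|_{q'}\le C\|\phi\|_{q'}$ (Bogovskii's operator), integrating by parts and applying H\"older is the standard modern route to the inequality $\|g-\bar g\|_q\le c\,\|\nabla g\|_{-1,q}$, of which the stated lemma is the special case $\nabla g=\div G$. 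It applies here without complications: $\Omega=]-1,1[^2$ is a bounded Lipschitz domain, so the Bogovskii right inverse of the divergence exists, and the periodic identification in $x_1$ plays no role in the statement. What each route buys: yours is short, self-contained modulo the divergence equation, and makes the dependence of the constant on $\Omega$ transparent; the Ne\v{c}as route yields the lemma in its strongest form, namely that a \emph{distribution} whose gradient lies in $W^{-1,q}(\Omega)$ is itself in $L^q(\Omega)$.

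That last point is the one place where your write-up needs care. The characterization $\|h\|_q=\sup\{\int_\Omega h\,\phi\,dx:\ \|\phi\|_{q'}\le 1\}$, and the integration by parts $\int_\Omega h\,\div v\,dx=-\langle\nabla h,v\rangle$ for $v\in W^{1,q'}_0(\Omega)^2$, both presuppose that $h\in L^q(\Omega)$, which is part of the conclusion rather than the hypothesis. You flag this and propose mollification, which is the right fix, but as written it is too quick: $g*\rho_\varepsilon$ and the identity $\nabla(g*\rho_\varepsilon)=\div(G*\rho_\varepsilon)$ make sense only on the inner domain $\Omega_\varepsilon=\{x\in\Omega:\ \mathrm{dist}(x,\partial\Omega)>\varepsilon\}$, so the smooth-case estimate must be applied on $\Omega_\varepsilon$ with a constant independent of $\varepsilon$ (for the square this holds because $\Omega_\varepsilon$ is again a square and the Bogovskii bound is scale invariant), with a normalization constant depending on $\varepsilon$, followed by a weak-compactness and exhaustion argument to identify the limit with $g$ up to a constant. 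None of this breaks the proof, and in the paper's application (the lemma is used inside a priori estimates, with $g=\partial_1\pi$ already integrable) the subtlety is harmless; but it has to be spelled out if the lemma is to be established in the generality in which it is stated and used as a citation to~\cite{Nec1966}.
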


\medskip

Let us recall the definition of \emph{weak solution} to the
Problem~\eqref{eq:genNewModel-m}-\eqref{eq:boundary-cond}.

\begin{definition} 
  \label{def:defin-weak-sol} Let $T>0$ and assume that $f\in
  L^2(0,T;V'_2)$.  We say that $u$ is a weak solution of
  problem~\eqref{eq:genNewModel-m} if:
  \begin{subequations} \label{eq:weak-solution}
    \begin{gather}
      u \in L^2(0, T; V_2)\cap L^{\infty}(0, T; L^2(\Omega)),
      \label{eq:weak-solution-1}
      \\
      u_t\in L^2(0, T; V'_2), 
      \label{eq:weak-solution-2}
      \\
      \begin{aligned}
        &\int_\Omega u(t)\,\varphi\,dx + \nu_0
        \int^t_{0}\int_\Omega\nabla u(s)\, \nabla \varphi\,dx ds +\nu_1
        \int^t_{0}\big\langle S(\dom u(s)), \dom \varphi \big\rangle\,
        ds
        \\
        &\quad - \int^t_{0} \int_\Omega (u(s)\cdot \nabla) \,\varphi
        \,u(s)\,dx ds =\int_\Omega u_0\,\varphi\,dx+\int^t_{0} \langle
        f(s), \varphi \rangle\, ds\quad \forall \,\varphi\in
        V_2. \label{eq:weak-solution-3}
      \end{aligned}
    \end{gather}
  \end{subequations}
\end{definition}
Due to the fact that $\nu_0>0$, the existence of weak solutions
follows for all $p\geq1$ in a standard way, and one has not to resort
to very sophisticated tools as in Diening, R{\r u}{\v{z}}i{\v{c}}ka,
and Wolf~\cite{DRW2010}. We will come back later on, for the
motivation on this assumption on $\nu_0$. In particular, we do not
have any further restriction on $p$ and the proof follows the same
lines of the classical work on monotone operators, as summarized in
Lions~\cite{Lio1969}. The result below is part of the folklore
associated with non-Newtonian fluids. We will give a sketch of the
proof since some of the calculations will be used many times in the
sequel.
\begin{theorem}
  \label{thm:existence-uniqueness-weak-solutions} 
  Let be given $\nu_0,\nu_1>0$, $p\in[1,2]$, $u_0\in L^2(\Omega)$ with
  $\div u_0=0$ and $(u_0\cdot n)_{|\Gamma}=0$, and $f \in
  L^2(0,T,V'_2)$. Then,  there exists a unique solution $u$
  to~\eqref{eq:genNewModel-m}-\eqref{eq:boundary-cond}
  satisfying~\eqref{eq:weak-solution-1}-\eqref{eq:weak-solution-3}.
  Moreover, the following estimates are verified
  \begin{equation*}
    \begin{aligned}
      \|u\|^2_{L^{\infty}(0,T; L^2)} + \nu_0 \|\nabla
      u\|^2_{L^2(0,T; L^2)} &\leq C
      \\
      \|u_t \|^2_{L^2(0, T; V'_2)} &\leq
      C,
    \end{aligned}
  \end{equation*}
  where $C= C(p, \delta, \nu_0,\nu_1, \|f\|_{L^2(0, T;V'_2)}, \|u_0\|_2, T, \Omega)$. 
\end{theorem}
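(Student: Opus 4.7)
The strategy is the classical Galerkin + monotonicity argument, available here thanks to the non-degenerate Newtonian component $-\nu_0\Delta u$ which provides an $H^1$-level control independent of the structure of $S$. I would fix a Galerkin basis $\{w_k\}_{k\in\N}$ of $V_2$ — for instance, the Stokes eigenfunctions compatible with the boundary conditions~\eqref{eq:boundary-cond} — and seek $u^n(t)=\sum_{k=1}^{n}c_k^n(t)\,w_k$ satisfying the ODE system obtained by testing~\eqref{eq:weak-solution-3} with each $w_j$ for $j\leq n$, with $u^n(0)$ the $L^2$-projection of $u_0$ onto $\mathrm{span}\{w_1,\dots,w_n\}$. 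Local existence of $u^n$ is provided by Peano's theorem since $S$ is continuous in its argument.

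Testing with $u^n$ itself, the convective term vanishes, the stress contribution $\langle S(\dom u^n),\dom u^n\rangle$ is non-negative by Lemma~\ref{eq:lemma-S} (with $B=0$), and Young's inequality on $\langle f,u^n\rangle$ yields
\[
\tfrac12\tfrac{d}{dt}\|u^n\|_2^2+\tfrac{\nu_0}{2}\|\nabla u^n\|_2^2\leq \tfrac{1}{2\nu_0}\|f\|_{V'_2}^2,
\]
which upon time integration gives both estimates of the statement, uniformly in $n$. This rules out blow-up in finite time, so $u^n$ is defined on the full interval $[0,T]$. The bound on $u^n_t$ in $L^2(0,T;V'_2)$ is then read off the equation by testing with $\varphi\in V_2$: the Newtonian dissipation contributes $\nu_0\|\nabla u^n\|_2\|\nabla\varphi\|_2$; since $p\in[1,2]$, the growth estimate $|S(\dom u^n)|\leq C(1+|\dom u^n|)$ (from~\eqref{eq:extra-stress-tensor}) puts $S(\dom u^n)\in L^2(\Omega_T)$; and Ladyzhenskaya's 2D inequality $\|u^n\|_4^4\leq C\|u^n\|_2^2\|\nabla u^n\|_2^2$ controls the convective part $u^n\otimes u^n$ in $L^2(\Omega_T)$.

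The passage to the limit is standard. Aubin–Lions gives, up to a subsequence, $u^n\to u$ weakly-$\ast$ in $L^\infty(0,T;L^2)$, weakly in $L^2(0,T;V_2)$, and strongly in $L^2(\Omega_T)$, which allows the convective term to pass to the limit directly. For the non-linear stress one has $S(\dom u^n)\rightharpoonup\chi$ weakly in $L^2(\Omega_T)$, and the Minty–Browder monotonicity trick based on the first inequality of Lemma~\ref{eq:lemma-S} identifies $\chi=S(\dom u)$. The stated estimates then pass to the limit by weak lower semicontinuity. For uniqueness I would subtract the equations for two weak solutions $u_1,u_2$, test the difference $w=u_1-u_2$ with itself, drop the non-negative stress term $\langle S(\dom u_1)-S(\dom u_2),\dom u_1-\dom u_2\rangle\geq 0$ by Lemma~\ref{eq:lemma-S}, and bound the convective remainder by $|\int_\Omega(w\cdot\nabla)u_2\cdot w\,dx|\leq C\|\nabla u_2\|_2\|w\|_2\|\nabla w\|_2$ via Ladyzhenskaya; Young's inequality absorbed into $\nu_0\|\nabla w\|_2^2$ together with Gronwall (using $\|\nabla u_2\|_2^2\in L^1(0,T)$) close the argument.

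The only genuinely non-trivial point, and thus the main obstacle, is the identification $\chi=S(\dom u)$ for the weak limit of the non-linear stress: since strong convergence of $\dom u^n$ is not available, the Minty argument based on the monotonicity of $S$ is essential. Apart from this, the argument collapses to the 2D Navier–Stokes playbook, precisely because $\nu_0>0$ supplies an $H^1$-estimate uniform in $n$ and insensitive to the singular/degenerate structure of $S$ when $p<2$.
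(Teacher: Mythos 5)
Your proposal is correct and follows essentially the same route as the paper: Galerkin approximation with Stokes eigenfunctions, the $\nu_0$-driven energy estimate (with the monotone stress term discarded or kept as a non-negative contribution), a comparison bound for $u_t$ in $L^2(0,T;V_2')$ using the sublinear growth of $S$ for $p\le 2$, Aubin--Lions compactness, the Minty monotonicity trick to identify the weak limit of $S(\dom u^n)$, and a Ladyzhenskaya--Gronwall argument for uniqueness. You also correctly single out the identification of the weak limit of the nonlinear stress as the only non-routine step, which is exactly where the paper invests its effort via the energy equality.
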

\begin{proof}
  We deduce the \textit{a priori} estimates on which the existence of
  weak solutions to~\eqref{eq:genNewModel-m}-\eqref{eq:boundary-cond}
  is based.  More properly, one should consider approximate Galerkin
  solutions defined as follows. Let $\{\omega^r\}$, with $r \in \N$,
  be the eigenfunctions of the Stokes operator and let $\{\lambda^r\}$
  be the corresponding eigenvalues; we define
  $X_m:=\textrm{span}\{\omega^1,\ldots,\omega^m\}$ and $P_m$ is the
  orthogonal projection operator over $X_m$.  We will seek approximate
  functions $\bbun(t, x) =\sum^m_{r=1} c^m_r(t)\omega^r(x)$ as
  solutions of the system of equations below, for all $1 \leq r \leq
  m$, $t \in [0,T]$
\begin{equation*} 
  \begin{aligned}
    \int_\Omega\Big[\bbun_t\omega^r +\nu_0 \nabla \bbun
    \nabla\omega^r + \nu_1 S (\dom\bbun)\, \dom \omega^r &+
    (\bbun\cdot \nabla)\bbun\, \omega^r\Big]\,dx \label{eq:weak-GNS} =
    \langle f, \omega^r\rangle,
    \\
    \bbun(0) = P^m u_0.
  \end{aligned} 
\end{equation*}
Taking the $L^2$-product of~\eqref{eq:genNewMod1-m} with $\bbun$,
using suitable integrations by parts and Young inequality we get
  \begin{equation*}
    \begin{aligned}
      \frac{1}{2}\frac{d}{dt}\|\bbun\|_2^2 + \nu_0\|\nabla \bbun\|_2^2
      + \frac{\nu_1}{2} \int_{\Omega}(\delta +& |\dom
      \bbun|)^{p-2}|\dom \bbun|^2dx
      \\
      &\qquad\leq \frac{\nu_0}{2} \|\nabla \bbun\|_2^2 +
      \frac{1}{2\nu_0}\|f\|_{V'_2}^2,
    \end{aligned}
  \end{equation*}
  Using~\eqref{eq:2.20} and integrating in time we arrive at the
  following inequality
  \begin{equation*}
    \begin{aligned}
      \|u^m(t)\|_2^2 + \nu_0\int_0^t\|\nabla u^m(s)\|_2^2\,ds
      &+C\nu_1\int_0^t\|\dom u^m(s)\|_p^p\,ds
      \\
      &\leq \|u_0\|_2^2 + \frac{1}{\nu_0} \int_0^t\|f (s)\|_{-1,
        2}^2\,ds+C(p)\nu_1\delta^p,
    \end{aligned}
  \end{equation*}
  for a.e. $t\in[0,T]$. We estimate, by comparison, the time
  derivative. The only term which requires some care is the
  extra-stress tensor $S$. %
Since $p\leq2$ we get
  \begin{equation*}
    \begin{aligned}
      \int_0^T\langle S(\dom \bbun),\dom \varphi\rangle\,ds&\leq
      \|S(\dom\bbun)\|_{L^2(\Omega_T)}\|\nabla\varphi\|_{L^2(\Omega_T)}
      \\
      &\leq
      \|\dom\bbun\|_{L^{2p-2}(\Omega_T)}^{p-1}\|\nabla\varphi\|_{L^2(\Omega_T)}
      \\
      & \leq C(T,\Omega)\|\nabla\bbun\|_{L^2(\Omega_T)}^{p-1}\|\nabla\varphi\|_{L^2(\Omega_T)}.
    \end{aligned}
  \end{equation*}
  Whence, by standard
  calculations
  \begin{equation}
    \label{eq:existence-time-derivative} 
    \int_0^t\| \bbun_t(s) \|_{-1,2}^2\,ds \leq C,
  \end{equation}
  for a constant $C$ depending on $p$, $\nu_0$, $\nu_1$,
  $\|f\|_{L^2(0,T;V'_2)}$, $\|u_0\|_2$, $T$, and $\Omega$.  This
  proves that if $u^m$ is a Galerkin approximate solution then,
  uniformly in $m\in\N$,
\begin{equation*}
  u^m\in L^\infty(0,T;L^2(\Omega))\cap L^2(0,T;V_2)
  \quad\text{and}\quad        u^m_t\in L^2(0,T;V_2').
\end{equation*}
Note that we can extract sub-sequences converging weakly to some $u$
in $L^2(0,T;V_2)$, weakly* in $L^\infty(0,T;L^2(\Omega))$ and, by
Aubin-Lions theorem, strongly in $L^2(\Omega_T)$, and a.e.~in
$\Omega_T$. We have enough regularity to pass to the limit in the
convective term. Moreover, since $S(\dom u^m)$ is bounded uniformly in
$L^2(\Omega_T)$, it follows that $S(\dom u^m)\rightharpoonup A$ for
some $A$ in $L^{2}(\Omega_T)$. (Observe that without the Laplacian
term we would have only a bound in $L^{p'}(\Omega_T)$). We have now to
check that $A=S(\dom u)$.  This is obtained with the monotonicity
trick, see e.g.~\cite[\S2-5.2]{Lio1969}. By usual Sobolev embeddings
(since we are in two dimensions) the function $t\mapsto\int_\Omega
(u\cdot\nabla)\, u\,u\,dx\in L^1(0,T)$, hence we can write the energy
equality between any couple $0\leq s_0\leq s\leq T$
\begin{equation}
  \label{eq:energy-equality}
  \frac{1}{2}\|u(s)\|^2_2+\nu_0\int_{s_0}^s\|\nabla
  u\|^2_2 \,d\tau+\nu_1\int_{s_0}^s\langle A,u\rangle\,d\tau
  =\frac{1}{2}\|u(s_0)\|^2_2+\int_{s_0}^s(f,u)\,d\tau.
\end{equation}
Defining for $\phi\in L^2(0,T;V_2)$ (a test function with the same
regularity of $u$)
\begin{equation*}
  \mathcal{X}^m_s:=\nu_1\int_0^s\langle S(\dom\bbun)-S(\dom
  \phi),\dom\bbun-\dom\phi\rangle\,d\tau+
  \nu_0\int_0^s\|\nabla u^m\|^2_2\,d\tau+\frac{1}{2}\|u^m(s)\|^2_2,
\end{equation*}
it follows, by using that $S$ is monotone and by semi-continuity of
the norm, that
$\liminf_m \mathcal{X}^m_s\geq\nu_0\int_0^s\|\nabla
u\|^2_2\,d\tau+\frac{1}{2}\|u(s)\|^2_2$, and also that
\begin{equation*}
  \lim_{m} \mathcal{X}^m_s=\int_0^s (f,u)+\frac{1}{2}\|u_0\|^2_2
  -\nu_1\int_0^s\langle
  A,\dom\phi\rangle\,d\tau-\nu_1\int_0^s\langle S(\dom
  \phi),\dom u-\dom \phi\rangle\,d\tau.
 \end{equation*}
Hence,  by using the equality~\eqref{eq:energy-equality} we get
\begin{equation*}
  \nu_1\int_0^s\langle A-S(\dom \phi),\dom u-\dom
  \phi\rangle\,d\tau\geq0
  \qquad a.e.\ s\in [0,T].
\end{equation*}
We fix $\phi=u-\lambda\, \psi$ for $\psi\in L^2(0,T;V_2)$ and
$\lambda>0$. Finally, 
letting $\lambda\to 0^+$ the thesis follows.

It is important to point out that the weak solution above constructed 
is unique. Let us suppose that we have two solutions $u_1$ and $u_2$
corresponding to the same data. We obtain the following inequality for
$U:=u_1-u_2$ (This follows by using the usual interpolation
inequalities as for the Navier-Stokes equations and since $U$ is
allowed as test function, see Constantin and Foias~\cite{CF1988})
\begin{equation*}
  \begin{aligned}
    \|U(t)\|^2_2+\nu_0\int_0^t\|\nabla U(s)\|^2_2\,ds+\nu_1\int_0^t\langle
    S(\dom u_1)-S(\dom u_2),\dom u_1-\dom u_2\rangle\,ds
    \\
    \leq \frac{C}{\nu_0}\int_0^t\|\nabla u_1(s)\|^2_2 \|U(s)\|^2_2\,ds.
  \end{aligned}
\end{equation*}
Since $S$ is monotone (cf.~Lemma~\ref{eq:lemma-S}) the integral
involving the extra stress-tensor is non-negative. Using the Gronwall
lemma and the energy estimate one obtains that $U\equiv0$.
\end{proof}  

This latter result is very relevant since it allows to conclude that
\textit{all} the sequence $\{u^m\}$ converges to $u$. Moreover, if we
have other \textit{a priori} estimates on $u^m$, the extra-regularity
is inherited by weak solutions directly. This will be used in the
proof of Theorem~\ref{prop:unicita-caso-periodico}. Observe also that,
at moment, we do not have any information on the pressure, apart that
there exists as a distribution, by using De Rham theorem.

\section{The space-periodic case} 
\label{periodic-case-section} 
In this section we are concerned with the space-periodic case, that is
$\Omega = \T^2$. Each considered function $w$ will satisfy $w(x+2\pi
e_i) = w(x)$, $i = 1, 2$, where $\{e_1, e_2\}$ is the canonical basis
of $\R^2$.  We also require all functions to have vanishing mean
value, to ensure the validity of the Poincar\'e inequality.  We prove
some regularity results and we will show why the hypothesis $\nu_0>0$
seems necessary in many arguments. We define
$\mathcal{V}_{\textrm{per}}(\Omega)$ as the space of vector-valued
functions on $\Omega$ that are smooth, divergence-free, and space
periodic with zero mean value. For $1<q<\infty$ and $k \in \N$, set
\begin{equation*}
   W^{k,q}_{\div}(\Omega) := \big\{ \textrm{closure of}\quad
  \mathcal{V}_{\textrm{per}}(\Omega)\quad \textrm{in}\quad
  W^{k,q}(\Omega) \big\},
\end{equation*}
endowed, with the usual norms.

In the space periodic setting many calculations are simpler since we
can use $-\Delta u$ as test function (now formally but the procedure
goes through the Galerkin approximation).  Since  in the
2D~space-periodic case $\int_\Omega (u\cdot\nabla)\,u \Delta u\,dx=0$ we get
\begin{equation}
  \label{eq:easy}
\frac{d}{dt}\|\nabla u\|^2+\nu_0\|\Delta u\|^2+\nu_1\mathcal{I}(u)\leq C\|f\|^2,
\end{equation}
hence, if we are able to construct such a solution (this is not
trivial at all due to some technical issues when passing to the limit
in $\int_0^T\mathcal{I}(\bbun(s))\,ds$, for a fixed $T>0$) that and
if $\nu_0=0$ we obtain as higher order estimate
\begin{equation*}
  \int_0^T\mathcal{I}(u)\,dt=  \int_0^T\int_{\T^2}(\delta+|\dom
  u|)^{p-2}|\nabla\dom u|^2\,dx dt<+\infty. 
\end{equation*}
We recall the following lemma, which is an adaption
of~\cite[Lemma~4.4]{BE-DI-RU:2010} to the two dimensional case.
\begin{lemma} 
  \label{lem:lemma-10} 
  Let $p\in (1, 2]$, $\delta\in(0,\infty)$, and $\ell\in [1,2)$.  Then,
  for all sufficiently smooth functions $u$ with vanishing mean value
  over $\Omega$, the following relations hold true
  \begin{align*}
    &\| u \|^p_{2, \ell}\leq c \big(\mathcal{I}(u) +
    \delta^p\big),
    \label{lem:lemma-10-1}
  \end{align*}
\end{lemma}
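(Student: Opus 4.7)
The plan is to prove this $W^{2,\ell}$-bound in three stages: (i)~reduce second-order derivatives of $u$ to first-order derivatives of $\dom u$; (ii)~extract $\mathcal{I}(u)$ from $\|\nabla\dom u\|_\ell$ via a weighted H\"older inequality; (iii)~control the remaining weighted integral via the 2D Sobolev embedding. For stage (i) I would invoke the purely algebraic identity
\[
\D_j\D_k u_i = \D_k(\dom u)_{ij}+\D_j(\dom u)_{ik}-\D_i(\dom u)_{jk},
\]
which gives $|D^2 u|\leq C|\nabla\dom u|$ pointwise. Combined with Poincar\'e's inequality (legitimate thanks to the vanishing-mean hypothesis on $u$) and Korn's inequality, the claim reduces to $\|\nabla\dom u\|_\ell^p\leq c(\mathcal{I}(u)+\delta^p)$. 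Note also that $\dom u$ has vanishing mean on $\T^2$ by periodicity and integration by parts, so Poincar\'e applies to $\dom u$ itself.

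For stage (ii), I would insert the trivial factor $(\delta+|\dom u|)^{\ell(p-2)/2}(\delta+|\dom u|)^{\ell(2-p)/2}\equiv 1$ into $\int_\Omega|\nabla\dom u|^\ell\,dx$ and apply H\"older's inequality with conjugate exponents $2/\ell$ and $2/(2-\ell)$ to obtain
\[
\|\nabla\dom u\|_\ell^\ell \leq \mathcal{I}(u)^{\ell/2}\Big(\int_\Omega(\delta+|\dom u|)^r\,dx\Big)^{(2-\ell)/2},\qquad r:=\frac{\ell(2-p)}{2-\ell}.
\]
For stage (iii), I would introduce the natural ``Diening'' potential $G:=(\delta+|\dom u|)^{p/2}$; since $|\dom u|$ is $1$-Lipschitz in $\dom u$, the chain rule yields
\[
|\nabla G|^2 \leq \Big(\frac{p}{2}\Big)^2(\delta+|\dom u|)^{p-2}|\nabla\dom u|^2,\qquad \|\nabla G\|_2^2\leq c\,\mathcal{I}(u).
\]
The decisive 2D feature now enters: the Sobolev embedding $W^{1,2}(\T^2)\hookrightarrow L^q(\T^2)$ for every finite $q$, applied to $G-\bar G$ via Poincar\'e (with $\bar G:=|\Omega|^{-1}\int_\Omega G\,dx$), provides $\|G-\bar G\|_q\leq C_q\mathcal{I}(u)^{1/2}$ for every $q<\infty$. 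Picking $q=2r/p$ (so that $\int G^q=\int(\delta+|\dom u|)^r$) and estimating the mean via Jensen's inequality (using concavity of $x\mapsto x^{p/2}$ for $p\in(1,2]$), which gives $\bar G\leq(\delta+|\Omega|^{-1}\|\dom u\|_1)^{p/2}$, yields a bound for $\int_\Omega(\delta+|\dom u|)^r\,dx$ by $\mathcal{I}(u)^{r/p}$, $\delta^r$, and a lower-order contribution.

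Finally, I would raise the H\"older bound to the power $p/\ell$ and invoke Young's inequality with conjugate exponents $2/p$ and $2/(2-p)$, producing a bound of the form $\|\nabla\dom u\|_\ell^p\leq \varepsilon\,\mathcal{I}(u)+C_\varepsilon\delta^p+\eta\,\|\nabla\dom u\|_\ell^p$, after which a second Young split absorbs the residual $\|\nabla\dom u\|_\ell^p$ into the left-hand side (choosing $\eta<1$). The main obstacle is the careful bookkeeping of exponents in this closing step: every non-$\mathcal{I}(u)$ and non-$\delta^p$ term must end up either absorbable on the left or dominated by a universal constant times $\delta^p$. The linchpin that makes this possible in two dimensions is the Sobolev embedding $W^{1,2}\hookrightarrow L^q$ for all finite $q$, a feature that fails in three dimensions and precisely accounts for the more delicate interpolation machinery used in~\cite{BE-DI-RU:2010}. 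The zero-mean hypothesis on $u$ is invoked silently throughout, both for Poincar\'e applied to $u$ and for the reduction step on $\dom u$.
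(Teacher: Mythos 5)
Your argument is correct, and it supplies a proof that the paper itself does not give: the paper simply cites \cite[Lemma~4.4]{BE-DI-RU:2010} and states the lemma as its two-dimensional adaptation. Your three-stage scheme (the algebraic identity reducing $|D^2u|$ to $|\nabla\dom u|$, the weighted H\"older splitting that extracts $\mathcal{I}(u)^{\ell/2}$, and the Sobolev embedding applied to $G=(\delta+|\dom u|)^{p/2}$ with $\|\nabla G\|_2^2\leq c\,\mathcal{I}(u)$) is essentially the mechanism underlying the cited lemma, and you correctly identify why the 2D embedding $W^{1,2}\hookrightarrow L^q$ for all finite $q$ is what removes the restriction on $\ell$ that one has in three dimensions (there $r=\ell(2-p)/(2-\ell)$ must stay below the critical integrability of $G$). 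The exponent bookkeeping closes as you claim: raising the H\"older bound to the power $p/\ell$ gives $\|\nabla\dom u\|_\ell^p\leq C\,\mathcal{I}(u)^{p/2}\bigl(\mathcal{I}(u)^{1/2}+\delta^{p/2}+\|\nabla\dom u\|_\ell^{p/2}\bigr)^{2-p}$, and each of the three resulting products is handled by Young with exponents $2/p$ and $2/(2-p)$. One small imprecision: in the third product you cannot make the coefficients of $\mathcal{I}(u)$ and of $\|\nabla\dom u\|_\ell^p$ simultaneously small, so the final inequality reads $\|\nabla\dom u\|_\ell^p\leq C_\eta\,\mathcal{I}(u)+C_\eta\,\delta^p+\eta\,\|\nabla\dom u\|_\ell^p$ rather than with an $\varepsilon\,\mathcal{I}(u)$; since the statement only requires some constant $c$, choosing $\eta<1$ and absorbing (legitimate because $u$ is smooth, so the left-hand side is finite) still concludes the proof.
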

Hence, the information on the regularity in the space variable which
we can extract from~\eqref{eq:easy}, in the case $\nu_0=0$, could be at most
\begin{equation*}
  u\in W^{2,\ell}(\T^2)\qquad \forall\, \ell<2,\quad a.e. \ t\in[0,T].
\end{equation*}
This is not enough to employ Thm.~\ref{thm:Dashti-Robinson} and
explains the introduction of the hypothesis $\nu_0>0$.

\begin{proof}[Proof of   Proposition~\ref{prop:unicita-caso-periodico}]
  In the light of the above observations the proof follows as in the 2D
  Navier-Stokes equations, see\cite{DA-RO:2009}. We test the equations
  by $-t\,\Delta u^m$ and we have
\begin{equation*}
  \frac{d}{dt}(t\,\|\nabla u^m\|^2)+\nu_0\,t\,\|\Delta
  u^m\|^2+\nu_1\,t\,\mathcal{I}(u^m)\leq C\,t\,\|f\|^2+\|\nabla u\|^2.
\end{equation*}
Hence, no matter of the non-negative term coming from the extra-stress
tensor, integrating in time over $[0,T]$ we have that
$\sqrt{t}\,u^m\in L^2(0,T;W^{2,2}(\T^2))$.  Due to uniqueness of the
solution the whole sequence $\{u^m\}$ converges to $u$ and by
lower-semicontinuity of the norm we obtain that $\sqrt{t}\,u\in
L^2(0,T;W^{2,2}(\T^2))$.
\end{proof}
For the sake of completeness, we recall that in the periodic 2D case,
with $\nu_0=0$ it is possible to prove the following result of
existence of regular solutions, see Kost~\cite{Kos2010}, which is an
adaption of those in~\cite{BE-DI-RU:2010} for the 3D case.  (Observe
that in absence of the Laplacian also the existence and uniqueness of
weak solutions is more delicate and the limit process on Galerkin
solutions requires some care). The following result, which is of
interest by itself, is not enough for our purposes of studying
uniqueness for solutions to~\eqref{eq:traj}.
\begin{proposition} 
  \label{eq:prop-nuzero} 
  Let be given $\delta\in[0,\delta_0]$, for some $\delta_0>0$, set
  $\nu_0=0$, $\nu_1>0$, and let $p\in (1,2]$.  Given $T>0$, assume
  that $f\in L^\infty(0,T;W^{1,2}(\T^2))\cap W^{1,2}(0, T;
  L^2(\T^2))$. Let $u_0\in W^{2,2}(\T^2)$ be such that $\div u_0=0$
  and $\div S(\dom u_0)\in L^2(\T^2)$.  Then, there is a time
  $0<T'\leq T$ (depending on the data of the problem) such that the
  system~\eqref{eq:genNewModel-m}, has a strong solution $u$ on $[0,
  T']$ satisfying, for $r\in(4/3,2)$,
  \begin{equation*}
    u \in L ^q (0,T';W^{2,r}(\T^2))\cap C (0,T';W^{1,q}(\T^2)),\qquad
    \forall\,q<\infty.   
  \end{equation*}
\end{proposition}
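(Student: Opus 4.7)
The plan is to adapt to the 2D periodic setting the local-in-time strong-solution argument of \cite{BE-DI-RU:2010}. I work with Galerkin approximations $\bbun \in X_m$ as in the proof of Theorem~\ref{thm:existence-uniqueness-weak-solutions}, choosing for $\{\omega^r\}$ the Fourier modes (smooth, divergence-free, zero-mean, periodic) so that higher-order testing is legitimate.

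The baseline bound from testing with $\bbun$ yields, since $\nu_0=0$, only $\bbun \in L^\infty(0,T;L^2)\cap L^p(0,T;W^{1,p})$. Testing with $-\Delta\bbun$ is the first refinement: the classical 2D identity $\int_{\T^2}(\bbun\cdot\nabla)\bbun\cdot\Delta\bbun\,dx = 0$ (obtained via the vorticity formulation and failing in 3D) eliminates the convective obstruction and gives control of $\mathcal{I}(\bbun)$ in $L^1_t$ up to a term involving $\|f\|_2$ and $\|\Delta\bbun\|_\ell$ with $\ell<2$. Combining with Lemma~\ref{lem:lemma-10}, which translates $\mathcal{I}(\bbun)$-bounds into $\|\bbun\|_{W^{2,\ell}}^p$-bounds, and invoking the 2D Sobolev embedding $W^{2,\ell}\hookrightarrow W^{1,q}$, I obtain a preliminary $L^p(0,T;W^{1,q})$ estimate for every $q<\infty$.

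The decisive step is to estimate $\bbun_t$. Differentiating the approximate equation in time and testing with $\bbun_t$ produces a relation containing $\tfrac{d}{dt}\|\bbun_t\|_2^2$, a positive term comparable to $\mathcal{J}(\bbun)$ (by the monotonicity provided by Lemma~\ref{eq:lemma-S}), and convective commutators of the form $\int(\bbun_t\cdot\nabla)\bbun\cdot\bbun_t\,dx$, which I bound via Ladyzhenskaya's inequality together with the previously obtained $W^{2,\ell}$-control. This yields a superlinear differential inequality for $Y(t):=\|\nabla\bbun(t)\|_2^2+\|\bbun_t(t)\|_2^2$. The hypothesis $\div S(\dom u_0)\in L^2$, together with $(u_0\cdot\nabla)u_0\in L^2$ (since $u_0\in W^{2,2}(\T^2)$) and $f(0)\in L^2$ (since $f\in W^{1,2}(0,T;L^2)$), bounds $\|\bbun_t(0)\|_2$ uniformly in $m$ by reading the approximate equation at $t=0$. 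A standard ODE comparison then produces $T'>0$, depending on $Y(0)$ and on the data, on which $Y$ remains bounded independently of $m$. Finally, the $W^{2,r}$-regularity with $r\in(4/3,2)$ is extracted by reading the equation as $-\nu_1\div S(\dom\bbun) = f - \bbun_t - (\bbun\cdot\nabla)\bbun - \nabla\pi^m$, with right-hand side uniformly bounded in $L^q(0,T';L^2)$, and applying the nonlinear-elliptic regularity theory for the $p$-Stokes operator.

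Passage to the limit $m\to\infty$ uses Aubin-Lions compactness for the convective term and the Minty-Browder trick of Theorem~\ref{thm:existence-uniqueness-weak-solutions} to identify $\lim S(\dom\bbun)=S(\dom u)$. Continuity $u\in C(0,T';W^{1,q})$ follows by interpolating the $L^\infty(0,T';W^{1,q})$-bound against the $L^2(0,T';L^2)$-bound on $u_t$. The main obstacle is the degeneracy of the $p$-Laplacian once the linear dissipation is removed: no $L^2$-control of $\Delta u$ is available, one must rely on the sub-optimal exponent $\ell<2$ of Lemma~\ref{lem:lemma-10}, and the Gronwall argument for $Y$ is genuinely superlinear, which is precisely what forces the result to be local on a short interval $[0,T']$ and prevents any extension to the full interval $[0,T]$ by purely a priori means.
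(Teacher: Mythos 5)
First, a point of comparison: the paper does not actually prove Proposition~\ref{eq:prop-nuzero}. It is recalled ``for the sake of completeness'' and attributed to Kost~\cite{Kos2010} as a two--dimensional adaptation of~\cite{BE-DI-RU:2010}, so there is no in-paper proof to measure your argument against. Your outline (Galerkin with Fourier modes, testing with $-\Delta u^m$ and using $\int_{\T^2}(u\cdot\nabla)u\cdot\Delta u\,dx=0$, a time-derivative estimate closed by a superlinear Gronwall inequality on a short interval, recovery of $W^{2,r}$ from the equation) is indeed the strategy of those references, and several of your steps are sound, including the use of $\div S(\dom u_0)\in L^2$ to bound $\|u^m_t(0)\|_2$ and the role of Lemma~\ref{lem:lemma-10}.

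However, the two steps you leave vague are precisely where the difficulty lives, and as written they do not close. (i) In the time-derivative estimate, with $\nu_0=0$ the left-hand side contains only $\tfrac{d}{dt}\|u_t\|_2^2$ and the degenerate quantity $\mathcal{J}(u)=\int(\delta+|\dom u|)^{p-2}|\dom u_t|^2\,dx$; there is no $\|\nabla u_t\|_2^2$ available. The convective term $\int(u_t\cdot\nabla)u\cdot u_t\,dx$ requires control of $\|u_t\|_4$, and Ladyzhenskaya's inequality produces a factor $\|\nabla u_t\|_2^{1/2}$ that you cannot absorb; the $W^{2,\ell}$ information on $u$ (which at that stage is only $L^p$ in time) does not substitute for it. The actual mechanism is the weighted H\"older bound
\begin{equation*}
\|\nabla u_t\|_\ell\leq C\,\mathcal{J}(u)^{1/2}\,\big\|(\delta+|\dom u|)^{2-p}\big\|_{\ell/(2-\ell)}^{1/2},\qquad \ell=\tfrac{4}{4-p}>\tfrac43,
\end{equation*}
combined with Gagliardo--Nirenberg between $L^2$ and $W^{1,\ell}$ and with the $L^\infty_t L^2_x$ bound on $\dom u$; this is where the restriction $r\in(4/3,2)$ and the locality in time really originate, and your sketch does not identify it. (ii) The final step ``apply the nonlinear-elliptic regularity theory for the $p$-Stokes operator'' invokes a black box that is not available: the introduction of this very paper stresses that the $W^{2,q}$ results of~\cite{BeiC2012} have no counterpart for the $p$-Stokes system. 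In the periodic setting the $W^{2,r}$ bound has to be extracted from a suitably time-integrable bound on $\mathcal{I}(u)$ via Lemma~\ref{lem:lemma-10}, and upgrading $\mathcal{I}(u)\in L^1(0,T')$ to the integrability needed for $u\in L^q(0,T';W^{2,r})$ for all $q<\infty$ is itself a nontrivial step of~\cite{BE-DI-RU:2010}. A smaller but related caveat: with $\nu_0=0$ the identification $A=S(\dom u)$ in the limit is more delicate than in Theorem~\ref{thm:existence-uniqueness-weak-solutions}, since $S(\dom u^m)$ is a priori bounded only in $L^{p'}$, as the paper itself remarks. Until these points are filled in, your argument is a plausible road map rather than a proof.
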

\begin{remark}
  One can obtain further regularity results for $u_t$ and also for
  $\nabla\pi$ (the latter if $\delta>0$).
\end{remark}
\section{Space-time regularity in the Dirichlet case}
\label{sec:time-regularity}
In this section we consider the time evolution problem with Dirichlet
boundary conditions and we prove a result of regularity for smooth
data. Then, we will relax some of the assumptions to prove the main
result of the paper.  We start by showing a first regularity result
for the time derivative of the solutions to the
problem~\eqref{eq:genNewModel-m} with Dirichlet boundary
conditions. We prove now some results by using as test functions first
and second order time derivatives of the velocity. These are legal
test functions, since if $u$ is divergence-free and $u_{|\Gamma}=0$,
then $\frac{\partial^k u}{\partial t^k}$ shares the same two
properties, for all $k\in\N$. In particular, the following result is
valid in any smooth and bounded domain, while the hypothesis of flat
boundary will be used for the $W^{2,2}(\Omega)$-regularity.
\begin{lemma} 
  \label{lem:lemma-C} 
  Let $p \in (1, 2]$, $\delta>0$, $f\in W^{1,2}(0,T;L^2(\Omega))$,
  $u_0\in W^{2,2}(\Omega)\cap V_2$, and let $u$ be a weak solution of
  problems~\eqref{eq:genNewModel-m}-\eqref{eq:boundary-cond}. Then,
  \begin{equation} \label{eq:time-time}
    \begin{aligned}
      \|u_t\|^2_{L^\infty(0, T;L^2)}+ \|\nabla u
      \|^2_{L^\infty(0, T;L^2)} &+ \nu_0\|\nabla u_t\|^2_{L^2(0, T;L^2)}
      \\
      &
      +\nu_1\|\mathcal{J}(u)\|_{L^1(0, T)} \leq C,
    \end{aligned}
  \end{equation}
  where the constant $C$ depends on $p$, $\delta$, $\nu_0$, $\nu_1$,
  $\|f\|_{W^{1,2}(0, T;L^2)}$, $\|u_0\|_{2,2}$, $T$, and
  $\Omega$.
\end{lemma}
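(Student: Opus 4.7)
The plan is to derive estimate~\eqref{eq:time-time} uniformly on the Galerkin approximations $u^m$ from the proof of Theorem~\ref{thm:existence-uniqueness-weak-solutions} and then pass to the limit; by uniqueness of weak solutions, the bounds transfer to $u$ by lower semicontinuity. Two tests of the Galerkin system are required: the time-differentiated equation tested with $u^m_t$, which produces the three terms $\|u^m_t\|_{L^\infty(0,T;L^2)}$, $\nu_0\|\nabla u^m_t\|^2_{L^2(0,T;L^2)}$ and $\nu_1\|\mathcal{J}(u^m)\|_{L^1(0,T)}$; and the original equation tested with $u^m_t$, which is used at the end to produce $\|\nabla u^m\|_{L^\infty(0,T;L^2)}$.

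For the main step I formally differentiate the Galerkin ODE system in time -- legal since the spatial basis is smooth and the coefficient ODEs are locally $C^1$ -- and take the $L^2$ pairing with $u^m_t \in X_m$. The diffusion produces $\nu_0\|\nabla u^m_t\|_2^2$, while the non-Newtonian term gives $\nu_1\!\int\!\D_{kl}S_{ij}(\dom u^m)(\dom u^m_t)_{kl}(\dom u^m_t)_{ij}\,dx$, which by the coercivity bound~\eqref{eq:ass-1a} dominates $\nu_1(p-1)\mathcal{J}(u^m)$. Expanding $\D_t[(u^m\cdot\nabla)u^m]$, the piece $((u^m\cdot\nabla)u^m_t,u^m_t)$ vanishes by $\div u^m=0$, while $((u^m_t\cdot\nabla)u^m,u^m_t)$ is controlled by the 2D Ladyzhenskaya inequality $\|u^m_t\|_4^2\le C\|u^m_t\|_2\|\nabla u^m_t\|_2$ and partly absorbed into $\nu_0\|\nabla u^m_t\|_2^2$, leaving a $C\nu_0^{-1}\|u^m_t\|_2^2\|\nabla u^m\|_2^2$ remainder. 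The resulting differential inequality is
\begin{equation*}
\tfrac{d}{dt}\|u^m_t\|_2^2 + \nu_0\|\nabla u^m_t\|_2^2 + 2\nu_1(p-1)\mathcal{J}(u^m) \le C\bigl(1+\|\nabla u^m\|_2^2\bigr)\|u^m_t\|_2^2 + C\|f_t\|_2^2.
\end{equation*}

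To close the Gronwall argument I need to bound $\|u^m_t(0)\|_2$. Reading the Galerkin equation at $t=0$ with $u^m(0)=P_m u_0$ and integrating by parts gives
\begin{equation*}
u^m_t(0) = P_m\bigl[\nu_0\Delta u_0 + \nu_1\div S(\dom u_0) - (u_0\cdot\nabla)u_0 + f(0)\bigr].
\end{equation*}
The Laplacian and convective pieces are controlled by $\|u_0\|_{2,2}$ (using 2D Sobolev embeddings for the quadratic term), and $f(0)\in L^2$ follows from $W^{1,2}(0,T;L^2)\hookrightarrow C([0,T];L^2)$. The non-Newtonian piece is where the hypothesis $\delta>0$ is essential: the chain rule together with~\eqref{eq:ass-1b} gives $|\nabla S(\dom u_0)|\le (3-p)(\delta+|\dom u_0|)^{p-2}|D^2 u_0|\le (3-p)\delta^{p-2}|D^2 u_0|$ (since $p-2\le 0$), so $\|\div S(\dom u_0)\|_2\le C(p,\delta)\|u_0\|_{2,2}$. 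Since $\|\nabla u^m\|_2^2\in L^1(0,T)$ uniformly by Theorem~\ref{thm:existence-uniqueness-weak-solutions}, Gronwall delivers the uniform bounds on $\|u^m_t\|_{L^\infty(0,T;L^2)}$, $\|\nabla u^m_t\|_{L^2(0,T;L^2)}$, and $\|\mathcal{J}(u^m)\|_{L^1(0,T)}$.

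For the remaining $\|\nabla u^m\|_{L^\infty(0,T;L^2)}$ bound I test the original Galerkin equation with $u^m_t$: introducing the convex potential $\Phi(D):=\int_0^{|D^{sym}|}s(\delta+s)^{p-2}\,ds$, which satisfies $\D_D\Phi=S$, the stress term integrates exactly to $\nu_1\tfrac{d}{dt}\!\int_\Omega\Phi(\dom u^m)\,dx$; the convective contribution, via $((u^m\cdot\nabla)u^m,u^m_t)=-((u^m\cdot\nabla)u^m_t,u^m)$ and 2D Ladyzhenskaya, is absorbed using the already-controlled $\|\nabla u^m_t\|_{L^2(0,T;L^2)}$. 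Integrating in time and using $\Phi(\dom u_0)\in L^1$ yields the uniform bound on $\|\nabla u^m(t)\|_2$. The whole sequence then converges to $u$ by uniqueness, and lower semicontinuity transfers all estimates. \textbf{The main obstacle} is the initial-time estimate on $u^m_t(0)$: without the non-degeneracy $\delta>0$, the singular factor $(\delta+|\dom u_0|)^{p-2}$ blows up wherever $\dom u_0=0$, so $\div S(\dom u_0)\in L^2$ cannot be deduced from $u_0\in W^{2,2}$ alone -- which is exactly why $\delta>0$ appears in the hypotheses.
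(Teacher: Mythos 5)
Your proposal is correct and follows essentially the same route as the paper: testing the equation and its time derivative with $u_t$, using the potential $\mathcal{M}$ (your $\Phi$) to integrate the stress term exactly, the coercivity bound~\eqref{eq:ass-1a} to produce $\mathcal{J}(u)$, and the hypothesis $\delta>0$ to control $\|u_t(0)\|_2$ through $\div S(\dom u_0)\in L^2(\Omega)$. The only immaterial difference is that you close the two estimates sequentially (Gronwall on the time-differentiated inequality first, then the $\nabla u$ bound), whereas the paper sums the two differential inequalities, absorbs the $\varepsilon\|\nabla u_t\|_2^2$ term, and applies Gronwall once.
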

As in the previous result we only prove the a priori estimates.  A
complete proof can be obtained through a Galerkin approximation and
for the reminder of this section we drop the superscript $``m"$. We
also define    
\begin{equation*}
     M(t):= \int_0^t \big(\delta + s)^{p-2}s \, ds \geq 0, \quad 
     \textrm{for}\quad  t\geq 0.
  \end{equation*}
  Observe that $M(t) \simeq (\delta + t)^{p-2}t^2$ and also
  $(\delta+t)^{p-2}t^2\leq t^p$, with $1\leq p\leq2$. This shows that
  \begin{equation} 
    \label{eq:stress-tens-relation} 
0\leq    \mathcal{M}(u):=\int_\Omega M(|\dom u |) \,dx\leq C(p)\|\dom u\|_p^p,\quad \textrm{with}\quad
    1\leq p\leq 2. 
  \end{equation}
\begin{proof}[Proof of Lemma~\ref{lem:lemma-C}]
  First, we multiply~\eqref{eq:genNewMod1-m} by $u_{t}$ and integrate
  by parts.  We observe that taking the duality of $-\div S(\dom u)$
  against $u_t$, we get
  \begin{equation} 
    \label{eq:idea-furba} 
    -\big\langle{\div}\,
    S(\dom u), u_t\big\rangle = \big\langle S(\dom u ), \dom
    u_t\big\rangle = \frac{d}{dt}\mathcal{M}(u).
  \end{equation}
  By suitable integrations (since $\div u_t=0$) we obtain
  \begin{equation*}
    \|u_t\|_2^2 + \frac{\nu_0}{2}\frac{d}{dt} \|\nabla
    u\|_2^2+\nu_1\frac{d}{dt}\mathcal{M}(u)=
    \int_\Omega  \big(f\,  u_t-(u\cdot\nabla)\,u\, u_t\big) \,dx. 
  \end{equation*}
  By using H\"older and Gagliardo-Nirenberg inequalities, with the
  boundedness of the kinetic energy, we get, for all $\varepsilon>0$
  \begin{equation*}
    \begin{aligned}
      \left| \int_\Omega (u\cdot\nabla)\,u\, u_t\,dx \right| &\leq
      \|u\|_4\|\nabla u\|_2\|u_t\|_4
      \\
      &\leq C\|u\|_2^{\frac{1}{2}}\|\nabla u\|_2^{\frac{1}{2}}
      \|\nabla u\|_2\|u_t\|_2^{\frac{1}{2}}\|\nabla
      u_t\|_2^{\frac{1}{2}}
      \\
      &\leq c_\varepsilon \big(\|\nabla u\|_2^2 +\|u_t\|_2^2 \|\nabla
      u\|_2^2\big) + \varepsilon \|\nabla u_t\|_2^2,
    \end{aligned}
  \end{equation*}
  Thus, we obtain the following differential inequality
  \begin{equation}
    \label{eq:timed1}    
    \|u_t \|_2^2 + \frac{d}{dt} \big({\nu_0}\|\nabla
    u\|_2^2+{\nu_1}\mathcal{M}(u)\big)
    \leq  c_\varepsilon \big(\|\nabla u\|_2^2 +\|u_t\|_2^2 \|\nabla
    u\|_2^2+\|f\|_{2}^2\big) + \varepsilon \|\nabla u_t\|_2^2,
  \end{equation}
  which  we clearly cannot use directly, due to the lack of
  control for $\nabla u_t$.
\begin{remark}
  Another path will be that of using improved estimates for $\nabla u$
  to estimate the convective term, see the last section.
\end{remark}
We take now the time derivative of~\eqref{eq:genNewMod1-m}, multiply
by $u_t$ and integrate by parts (recalling that
$\int_\Omega(u\cdot\nabla)\,u_t\,u_t\,dx =0$) to obtain
  \begin{equation} 
    \label{eq:first-result-intermediate-ineq}
    \frac{1}{2}\frac{d}{dt} \|u_t\|^2_2 + \nu_0 \|\nabla u_t\|^2 +
    \nu_1\langle \D_t\big(S(\dom u)\big), \dom u_t\rangle \leq
    \left|\int_\Omega \Big((u_t\cdot\nabla)\,u\, u_t+ f_t\, u_t\Big)\,dx\right|.
  \end{equation}
  By~\eqref{eq:ass-1a} the term involving $S$
  in~\eqref{eq:first-result-intermediate-ineq} is non-negative being
  estimated from below by a multiple of $\mathcal{J}(u)\geq0$.  Let us
  focus on the right-hand side
  of~\eqref{eq:first-result-intermediate-ineq}. By using  H\"older
  and interpolation inequality, and the energy estimate we get, for
  each $\eta>0$,
  \begin{equation*}
    \begin{aligned}
      \left|\int_\Omega(u_t\cdot\nabla)\,u\,u_t\,dx\right|
      =\left|\int_\Omega(u_t\cdot\nabla)\,u_t\,u\,dx\right|
      &\leq \|u_t\|_4\|\nabla u_t\|_2\| u\|_4
      \\
      &\leq C\|u_t\|_2^{\frac{1}{2}} \|\nabla u_t\|_{2}^{\frac{3}{2}}
      \| u\|_2^{\frac{1}{2}} \| \nabla u\|_{2}^{\frac{1}{2}}
      \\
      &\leq c_\eta\|\nabla u\|_{2}^2\|u_t\|_{2}^2 
      +\eta\|\nabla u_t\|_{2}^2,
    \end{aligned}
  \end{equation*}
  hence, choosing $\eta>0$ small enough we get
  \begin{equation}
    \label{eq:timed2}
    \frac{d}{dt}\|u_t\|^2_2 + \nu_0 \|\nabla u_t\|^2_2
    + \nu_1\mathcal{J}(u) 
    \leq    C\big( \|\nabla u\|_{2}^2\|u_t\|_2^2+ \|f_t\|_{2}^2\big).
  \end{equation}
  Summing up~\eqref{eq:timed1}-\eqref{eq:timed2} and choosing
  $\varepsilon>0$ small enough we get finally
  \begin{equation*}
    \begin{aligned}
      \frac{d}{dt}\Big(\|u_t\|_2^2+\nu_0\|\nabla
      u\|_2^2+\nu_1\mathcal{M}(u)\Big)+\|u_t\|_2^2+\nu_0\|\nabla
      u_t\|_2^2+\nu_1\mathcal{J}(u)
      \\
      \leq C \Big(\|\nabla u\|_2^2\|u_t\|_2^2 +\|\nabla
      u\|_2^2+\|f\|_{2}^2+\|f_t\|_{2}^2\Big).
    \end{aligned}
  \end{equation*}
  To integrate over $[0,T]$ we need to make sense to
  $\|u_t(0,\cdot)\|_2$. From the assumptions on the data, the fact that
  $\delta>0$, and $u^m(0) = P^mu_0$ we easily get
  (cf.~\cite[\S~5]{BE-DI-RU:2010}) that
  \begin{equation*}
  \|u_t^m(0)\|_2\leq c\big( \|u^m_0\|_{2,2}^2 +\|f^m(0)\|_2\big).
    \end{equation*}
    Recall that we are working on the finite dimensional approximation
    $u^m$ and taking the limit $m\to+\infty$. With Gronwall lemma and
    by using the fact that $\nabla u\in L^2(0,T;L^2(\Omega))$, we get
    for a.e $t\in [0, T]$
  \begin{equation*}
    \begin{aligned}
      \| u_t (t) \|^2_2 + \nu_0 \|\nabla\ut\|_2^2+
   &   \int_0^t \Big(\|u_t(s)\|_2^2+ {\nu_0}\|\nabla u_t (s)\|^2_2
      + \mathcal{J}(u(s))\big)\, ds
      \\
      &\qquad \leq C (\nu_0,\nu_1,\delta,T, \|f\|_{W^{1,2}(0,T;L^2)},\|u_0\|_{2,2},\Omega),
    \end{aligned}
  \end{equation*}
 hence the thesis.
\end{proof}
\begin{remark}
  The hypotheses on the external force can be slightly relaxed, but
  this is inessential in our treatment. 
\end{remark}
We now prove Proposition~\ref{lem:main-theo-with-delta-with-nonlin}. For the
reader's convenience we split the proof into two parts. First, we
perform a preliminary study of the system obtained removing the
convective term $(u\cdot\nabla)\,u$ from~\eqref{eq:genNewMod1-m}.
\begin{subequations} 
  \label{eq:without-nonlin}
  \begin{align}
    u_t -\nu_0\Delta u -\nu_1 \div\, S (\dom u) + \nabla \pi = f\quad \,
    \textrm{in}\quad \,& [0, T] \X \Omega,
    \label{eq:genNewMod1-nonl}
    \\
    \div  u = 0 \quad \,\textrm{in}\quad \, & [0, T] \X \Omega,
    \label{eq:genNewMod2-nonl} 
    \\
    u = 0 \quad \,\textrm{in}\quad \, & [0, T] \X \Gamma,
    \\
    u (0) = u_0 \quad \, \textrm{in}\quad \, &\Omega,
    \label{eq:genNewMode3-nonl}
  \end{align}
\end{subequations}
and focusing on the role of the nonlinear stress-tensor.  The
system~\eqref{eq:without-nonlin} can be treated similarly to a steady
state problem if we have good enough \textit{a priori} estimates on
$u_t$. We will then address the full
problem~\eqref{eq:genNewModel-m}-\eqref{eq:boundary-cond}, by adding
suitable estimates for the convective term.
\begin{lemma} 
  \label{lem:main-theo-with-delta} 
  Let $\nu_0>0$, $\delta>0$ and $p\in \big[\frac{3}{2}, 2\big]$. Given
  $T>0$, assume that $u_0\in W^{2,2}(\Omega)\cap V_2$ and $f\in
  W^{1,2}(0, T; L^2(\Omega))$. Then,
  problem~\eqref{eq:without-nonlin}-\eqref{eq:boundary-cond} admits a
  unique solution, such that~\eqref{eq:regularity-maximal} holds true.
\end{lemma}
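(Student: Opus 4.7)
The plan is to adapt the scheme of Lemma~\ref{lem:lemma-C} to the convection-free system and then to treat the resulting equation as a quasi-stationary Stokes-type problem, a.e.~in time. All calculations are to be read at the level of the Galerkin approximations introduced in Theorem~\ref{thm:existence-uniqueness-weak-solutions}, the final estimates passing to the limit by lower semicontinuity. Uniqueness comes for free: the difference $U:=u_1-u_2$ of two solutions satisfies an identity in which the $S$-contribution is non-negative by monotonicity (Lemma~\ref{eq:lemma-S}) and the convective term is absent, whence $U\equiv 0$ directly, without Gronwall.

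\textbf{Step 1 (time regularity).} I would follow the scheme of Lemma~\ref{lem:lemma-C} verbatim, testing the equation with $u_t$ and the time-differentiated equation with $u_t$ once more. Since $(u\cdot\nabla)u$ is absent, the awkward right-hand sides in~\eqref{eq:timed1}--\eqref{eq:timed2} that require interpolating $\|u\|_4,\|u_t\|_4$ disappear entirely, and the differential inequality trivialises. With $\delta>0$ and $u_0\in W^{2,2}\cap V_2$ one checks $\|u_t(0)\|_2<\infty$ by reading off the equation at $t=0$, and Gronwall yields
\begin{equation*}
\|u_t\|_{L^\infty(0,T;L^2)}^2+\|\nabla u\|_{L^\infty(0,T;L^2)}^2+\nu_0\|\nabla u_t\|_{L^2(0,T;L^2)}^2+\nu_1\|\mathcal{J}(u)\|_{L^1(0,T)}\leq C.
\end{equation*}

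\textbf{Step 2 (tangential $D^2$-estimate).} For a.e.~fixed $t$, view the equation as $-\nu_0\Delta u-\nu_1\div S(\dom u)+\nabla\pi=f-u_t$ and test by $-\D_{11}u$. This is an admissible test field thanks to the strip geometry~\eqref{eq:boundary-cond}: periodicity in $x_1$ and $u_{|\Gamma}=0$ make $\D_{11}u$ divergence-free and vanishing on $\Gamma$. The pressure contribution drops after two integrations by parts using $\div u=0$; the Laplacian produces $\nu_0\|\D_1\nabla u\|_2^2$; and the extra stress, via the chain rule applied to $S(\dom u)$ together with inequality~\eqref{eq:ass-1a}, yields a positive multiple of $\mathcal{I}_1(u)$. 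All boundary terms vanish by periodicity in $x_1$ and by $u_{|\Gamma}=0$ (hence $\D_{11}u_{|\Gamma}=0$). Applying Young's inequality on the RHS, invoking Step~1 to control $\|f-u_t\|_2$, and integrating in time give $\D_1\nabla u\in L^2(0,T;L^2)$ and $\mathcal{I}_1(u)\in L^1(0,T)$.

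\textbf{Step 3 (normal derivatives and pressure).} From $\div u=0$ we immediately get $\D_{22}u_2=-\D_{12}u_1\in L^2(0,T;L^2)$, controlling all of $D^2_\ast u$ by Step~2. For the remaining scalar $\D_{22}u_1$ together with $\D_1\pi$ and $\D_2\pi$, inspect the two scalar components of the equation separately: each contribution to $\div S(\dom u)$ involving $\D_{22}u_1$ carries, by~\eqref{eq:ass-1b}, a coefficient bounded by $(3-p)\delta^{p-2}$. Using $\delta>0$ in an essential way, these cross-terms can be transferred to the left-hand side and absorbed into $\nu_0\D_{22}u_1$, provided the resulting algebraic system for $(\D_{22}u_1,\D_1\pi,\D_2\pi)$ admits a bounded inverse; this is precisely where the hypothesis $p\in[3/2,2]$ intervenes, guaranteeing uniform invertibility. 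A final application of Ne\v{c}as' Lemma~\ref{lem:lemma-2.6-crispo} controls $\pi$ itself (up to an additive constant) from $\nabla\pi$, and~\eqref{eq:regularity-maximal} follows. The main obstacle is precisely this Step~3: closing the estimates on the normal direction and on the pressure without circular dependence on $D^2u$. It is exactly for this closure that the combined hypotheses $\delta>0$ and $p\geq 3/2$ are indispensable, as together they ensure the $S$-induced cross terms remain strictly subordinate to the Newtonian principal part $\nu_0\Delta u$.
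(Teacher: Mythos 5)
Your Steps 1 and 2 follow the paper's strategy (test with $u_t$ and the time-differentiated equation, then with $-\D_{11}^2u$, all boundary terms vanishing by periodicity in $x_1$ and $u_{|\Gamma}=0$), and the remark that uniqueness is immediate by monotonicity is fine. The gap is in your Step 3, and it is twofold. First, the pressure gradient cannot be obtained by ``inverting an algebraic system for $(\D_{22}^2u_1,\D_1\pi,\D_2\pi)$'': pointwise you have two scalar equations and three unknowns, so no such inversion exists. The actual argument is triangular: one first bounds $\D_1\pi$ by differentiating the momentum equation in the tangential direction, writing $\nabla\D_1\pi=\div G$ with $\|G\|_2^2\leq C\big(\|\D_1\nabla u\|_2^2+\|u_t\|_2^2+\|f\|_2^2+\delta^{p-2}\mathcal{I}_1(u)\big)$ and invoking Ne\v{c}as' Lemma~\ref{lem:lemma-2.6-crispo} (this is where $\delta>0$ enters, through $(\delta+|\dom u|)^{2(p-2)}\leq\delta^{p-2}(\delta+|\dom u|)^{p-2}$). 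Only then is the first component of the equation solved pointwise for $\D_{22}^2u_1$ --- that component contains $\D_1\pi$ but not $\D_2\pi$ --- and finally $\D_2\pi$ is read off from the second component by comparison. Without the Ne\v{c}as step your scheme cannot close; your only use of that lemma (recovering $\pi$ from $\nabla\pi$) goes in the wrong direction.

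Second, your absorption mechanism is wrong in spirit: the bound $(3-p)\delta^{p-2}$ coming from~\eqref{eq:ass-1b} is \emph{large} for small $\delta$ (since $p<2$), so the $S$-induced coefficient of $\D_{22}^2u_1$ cannot be absorbed into $\nu_0$ by smallness. What actually happens is a sign argument: collecting the coefficient of $\D_{22}^2u_1$ in the first component gives $\alpha_1=\nu_0+\tfrac{\nu_1}{2}(\delta+|\dom u|)^{p-2}+\nu_1(p-2)(\delta+|\dom u|)^{p-3}(\dom u)_{12}^2/|\dom u|$, and since $(\dom u)_{12}^2/|\dom u|\leq\delta+|\dom u|$ and $p-2\leq0$ one obtains $\alpha_1\geq\nu_0+\nu_1\big(p-\tfrac{3}{2}\big)(\delta+|\dom u|)^{p-2}\geq\nu_0>0$ precisely when $p\geq\tfrac{3}{2}$. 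Division by $\alpha_1$ is then legitimate regardless of the size of $\delta^{p-2}$, and the quotient of the remaining terms by $\alpha_1$ stays bounded by a constant times $|D^2_\ast u|+|\D_1\pi|+|u_t|+|f|$, all previously controlled. So $p\geq 3/2$ enters through this pointwise lower bound on $\alpha_1$, not through the invertibility of a coupled system, and $\delta>0$ enters through the pressure estimate, not through an absorption of the cross terms.
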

\begin{proof}
  We adapt to the time-dependent case a technique with three
  intermediate steps taken from~\cite{BEI:2009,CR:2009}: In the first
  step we bound the tangential derivative of velocity and pressure; In
  the second step we estimate the normal derivative of the velocity
  field; In the last step we estimate the normal derivative of the
  pressure.

  Again we merely prove the \textit{a priori} estimates. Observe that
  for this simpler problem without convection, the same existence
  proved in Theorems~\ref{thm:existence-uniqueness-weak-solutions} and
  regularity from Lemma~\ref{lem:lemma-C} clearly hold true
  (this is particularly relevant for what concerns $u_t$).

\bigskip

\noindent{\textbf{Step 1.}} We first prove that the following
estimates, concerning the tangential derivatives, hold true
      \begin{equation}
        \label{eq:solution-reg-1}
        \nu_0\|\nabla \D_1
        u\|^2+\nu_0\|\D^2_{22}u_2\|^2_{L^2(0,T;L^2)}
        +\|\D_1\pi\|_{L^2(0,T;L^2)}\leq C,
    \end{equation}
    where $C$ depends on $p$, $\delta$, $\nu_0$, $\nu_1$,
  $\|f\|_{W^{1,2}(0, T;L^2)}$, $\|u_0\|_{2,2}$, $T$, and
  $\Omega$.

\smallskip

We now use the particular features of the flat domain.  Multiplying
equation~\eqref{eq:genNewMod1-nonl} by $-\D_{11}^2 u$ and integrating 
by parts, it follows that
  \begin{equation*}
    \frac{1}{2}\frac{d}{dt} \|\D_1u\|^2_2
    +\nu_0\|\nabla \D_1 u\|^2_2 
    +(p-1)\nu_1\!\int_\Omega  (\delta + |\dom u|)^{p-2} |\D_1 \mathcal{D} u|^2\,dx
    \leq \|f\|_2\,\|\D_{11}^2 u\|_2.
  \end{equation*}
  By applying Young inequality and using relation~\eqref{eq:I1}, we
  get a.e. in $[0,T]$
  \begin{equation}
    \label{eq:estimate-test-11-final} 
    \begin{aligned}
      \|\D_1u(t)\|^2_2 +\int_0^t\big(\nu_0\|\nabla \D_1 u(s)\|^2_2
      &+
      \nu_1\mathcal{I}_1(u(s))\big)\,ds 
      \\
      &\leq C\big(\|\nabla
      u_0\|^2+\frac{1}{\nu_0}\int_0^t\|f(s)\|_2^2\,ds\Big),
    \end{aligned}
  \end{equation}
  and, since $\div u=0$, $\D^2_{22}u_2=-\D^2_{21} u_1$ and the
  estimate on $\D^2_{22} u_2$ follows.
 
  Let us focus on the pressure term. Differentiating the
  equation~\eqref{eq:genNewMod1-nonl} with respect to the tangential
  direction $x_1$, one has that
  \begin{equation*}
    \label{eq:pressure-estimate}
    \nabla\D_1\pi = \nu_0 \div \D_1 \nabla u + \nu_1\div
    \D_1\big[(\delta + |\dom u|)^{p-2}  \mathcal{D} u\big] -\D_1u_t +
    \D_1 f\qquad a.e. \text{ in }\Omega_T.
  \end{equation*}
 We observe that $\partial_1
  u_t=\div \left(\begin{matrix}
    \partial_t u_1&0
    \\
    \partial_t u_2&0
  \end{matrix}
\right)$ and $\partial_1 f=\div
\left(\begin{matrix}
    f_1&0
    \\
    f_2&0
  \end{matrix}
\right)$. Hence to apply Lemma~\ref{lem:lemma-2.6-crispo} to estimate
$\partial_1 \pi$, we only have to bound the term
$\D_1\big[(\delta+|\dom u|)^{p-2} \mathcal{D} u\big]$. A direct
computation gives
  \begin{equation*}
    \begin{aligned}
      \D_1\big[(\delta +  |\dom u|)^{p-2}  \mathcal{D} u\big]
      = (\delta + |\dom u|)^{p-2} \D_1 \mathcal{D} u + (p-2) (\delta
      + |\dom u|)^{p-3}  (\mathcal{D} u\cdot \D_1
      \mathcal{D} u)\frac{\mathcal{D} u}{|\mathcal{D}u|},
    \end{aligned}
  \end{equation*}
  and consequently
  \begin{equation*}
    \left| \D_1\big[(\delta +  |\dom u|)^{p-2}  \mathcal{D} u\big] \right|
    \leq (3-p)  (\delta + |\dom u|)^{p-2} | \D_1 \mathcal{D} u |\qquad
    a.e. \text{ in }\Omega_T.
  \end{equation*}
 Therefore, by comparison $\D_1\big[(\delta + |\dom u|)^{p-2}
  \mathcal{D} u\big]\in L^2(\Omega)$ and it follows that
  \begin{equation*}
    \int_\Omega \big| \D_1\big[(\delta +  |\dom u|)^{p-2}  
    \mathcal{D} u\big]\big\vert^2\,dx
    \leq c\,\delta^{p-2}\,\mathcal{I}_1(u)\qquad a.e.\ t\in [0,T].
  \end{equation*}
  By applying Lemma~\ref{lem:lemma-2.6-crispo} we have that
  \begin{equation*}
    \|\D_1\pi\|_2^2\leq  \|u_t\|_2^2 + \nu_0\|\D_1 \nabla u\|_2^2 + 
    \nu_1\,C\,\delta^{p-2}\,\mathcal{I}_1(u) + \|f\|_2^2 \qquad a.e.\ t\in[0,T],    
  \end{equation*}
  from which, integrating in time over $[0,T]$,
  using~\eqref{eq:estimate-test-11-final} and recalling the bounds
  previously proved on $u_t,\,\partial_1\nabla u$, and
  $\mathcal{I}_1$, then~\eqref{eq:solution-reg-1} follows.

  \bigskip

  \noindent{\textbf{Step 2.}} To bound
  $\|\D_{22}^2u_1\|_{L^{2}(0,T;L^2)}$, we consider a narrower range of
  values for the parameter $p$. Under the same hypotheses as before,
  but for $p\in\big[\frac{3}{2}, 2\big)$, we have
  \begin{equation*}
    \|\D^2_{22}u_1\|_{L^2(0, T;L^2)}\leq C,
  \end{equation*}
  where the constant $C$ depends on $p$, $\delta$, $\nu_0$, $\nu_1$,
  $\|f\|_{W^{1,2}(0, T;L^2)}$, $\|u_0\|_{2,2}$, $T$, and $\Omega$.

\medskip

We follow the main lines established in the proof
of~\cite[Lemma~3.3]{CR:2009}.  By calculating $\D_2\big[(\delta+|\dom
u|)^{p-2} \mathcal{D} u\big]$, the first equation
in~\eqref{eq:genNewMod1-nonl} can be written as 
\begin{equation}
    \label{eq:estimate-double-der-u1}
      \alpha_1\D^2_{22} u_1 = -F_1 -f_1+ \D_t u_1+ \D_1\pi,
  \end{equation}
  where
  \begin{equation*}
    \begin{aligned}
      \alpha_1&:=\nu_0 + \frac{\nu_1}{2} (\delta+|\dom u|)^{p-2} +
      \nu_1 (p -2) \frac{(\delta+|\dom u|)^{p-3}}{|\dom u|} (\dom
      u)_{12}(\dom u)_{12},
      \\
&\text{and}
      \\
      F_1&:= \big[\nu_0 +\nu_1(\delta + |\dom
      u|)^{p-2}\big]\D_{11}^2u_1+\frac{\nu_1}{2}(\delta + |\dom
      u|)^{p-2}\D^2_{12} u_2
      \\
      &\quad+\nu_1(p-2)\frac{(\delta + |\dom u|)^{p-3}}{|\dom
        u|}\Big[ \sum_{k,l=1}^2(\dom u)_{k l}\D_1 (\dom u)_{k l}(\dom u)_{11} +
      \D^2_{22}u_2 (\dom u)_{22}(\dom u)_{12}
      \\
      &\hspace{8cm}+\frac{\D^2_{12}u_2}{2} (\dom u)_{12}(\dom u)_{12}
      \Big].
 \end{aligned}
  \end{equation*}
  By direct calculations it can be easily seen that
  \begin{equation*}
    | F_1|\leq C\Big[\nu_0 + \nu_1\Big(p-\frac{3}{2}\Big)
    (\delta+|\dom u|)^{p-2}\Big]|D^2_\ast u|\qquad a.e.\text{ in }\Omega_T
  \end{equation*}
  and by using that $p\geq\frac{3}{2}$ we get
  \begin{equation*}
    \alpha_1\geq \Big[\nu_0 + \nu_1\Big(p-\frac{3}{2}\Big)
    (\delta+|\dom u|)^{p-2}\Big]\geq \nu_0>0.
  \end{equation*}
  Division of  both sides of~\eqref{eq:estimate-double-der-u1} by
  $\alpha_1$ is then legitimate and we infer that
  \begin{equation*}
    |\D^2_{22}u_1|
    \leq C\big( |D_\ast^2 u| + 
    \frac{1}{\nu_0}\big(|\D_1\pi| + |\D_t u_1| + |f_1|\big)\qquad a.e.\text{ in }\Omega_T.
  \end{equation*}
  Therefore, squaring and integrating over $\Omega_T$ we get
  \begin{equation*}
    \int_0^T \|\D^2_{22}u_1(s)\|_2^2\,ds
    \leq     \frac{C}{\nu_0}\int_0^T\Big( \|D_\ast^2 u(s)\|^2_2 + 
\|\D_1\pi(s)\|^2_2 + \|\D_t u_1(s)\|^2_2 + \|f_1(s)\|^2_2\Big)\,ds,
  \end{equation*}
which, by 
the previous results  is finite.
This finally shows that $D^2 u\in L^2(\Omega_T)$.

\bigskip

\noindent{\textbf{Step 3.}} The final step, which is not strictly
required for the particle trajectories uniqueness, is the regularity
of the normal derivative of pressure. Nevertheless, we include it for
the sake of completeness. Under the same hypotheses we have
  \begin{equation*}
    \|\D_2\pi\|_{L^2(0, T;L^2)}\leq C,
  \end{equation*}
  where the constant $C$ depends on $p$, $\delta$, $\nu_0$, $\nu_1$,
  $\|f\|_{W^{1,2}(0, T;L^2)}$, $\|u_0\|_{2,2}$, $T$, and $\Omega$.

\medskip

By using the  second equation in~\eqref{eq:genNewMod1-nonl}, one  can
write
  \begin{equation*}
    \begin{aligned}
      |\D_2\pi| &\leq c\Big(\nu_0 + \nu_1(p-2)
      (\delta + |\dom u|)^{p-2}\Big)  |D^2 u| + |\D_2u_t| +
      |f_2|\qquad a.e.\text{ in }\Omega_T.
    \end{aligned}
  \end{equation*}
  Hence, straightforward calculations lead to
  \begin{equation*}
    \int_0^T\|\D_2\pi (s)\|_2^2\,ds 
    \leq c\int_0^T\bigg(\Big[ \nu_0 + \nu_1\delta^{2(p-2)} \Big]\|D^2 u (s)\|_2^2 
    +  \|\D_2u_t (s)\|_2^2 + \|f (s)\|_2^2\bigg)\, ds,
  \end{equation*}
  and the assertion follows as a consequence of the previous results.
\end{proof}
We finally prove the same regularity results also in presence of the
convective term. We use a perturbation argument, treating
$(u\cdot\nabla)\,u$ as a right-hand side in
equation~\eqref{eq:genNewMod1-m}.
\begin{proof}[Proof of
  Proposition~\ref{lem:main-theo-with-delta-with-nonlin}]
  Here, we use the a priori estimates obtained for the
  problem~\eqref{eq:without-nonlin} with external body force
  \begin{equation*}
    \mathcal{F}:=  -(u\cdot \nabla)\,u + f.
  \end{equation*}
  In the derivation of estimates for $u_t$ we used that
  $\|f\|_{W^{1,2}(0,T;L^2)}$, while in
  Lemma~\ref{lem:main-theo-with-delta} the estimates depend
  essentially on the $L^2(\Omega_T)$-norm of the external force.
  Hence, by using Lemma~\ref{lem:lemma-C} it is then sufficient to
  estimate $\|(u\cdot\nabla)\,u\|_{L^2(0,T;L^2)}$ in terms of second
  order derivatives of $u$, to follow the same calculations in
  Step~1--3 of the previous result.

  By applying H\"older, Gagliardo-Nirenberg, and Young inequalities
  and the energy estimate, we get for each $\varepsilon>0$
  \begin{equation}
  \begin{aligned} 
        \label{eq:nonlin-stima-1} 
        \|(u\cdot\nabla)\,u\|_2 \leq \|u \|_{4}\|\nabla u\|_{4} &\leq
        c\| u \|_{2}^{\frac{1}{2}}\|\nabla u \|_{2}^{\frac{1}{2}}
        \|\nabla u\|_{2}^{\frac{1}{2}}\|D^2 u\|_{2}^{\frac{1}{2}}
        \\
        & \leq c_\varepsilon \|\nabla u\|_2^2 + \varepsilon\|D^2 u\|_{2}.
      \end{aligned}
    \end{equation}
    By using the same calculations as in the previous proposition and
    the \textit{a-priori} estimates in~\eqref{eq:time-time}
    --especially that $\nabla u\in L^\infty(0,T;L^2(\Omega))$-- we have
\begin{equation*}
  \begin{aligned}
&    \int_0^T\big(\|u\|^2_{2,2}+\|\pi\|_{1,2}^2\big)\,ds\leq
    C\int_0^T\big(\|f\|_2^2+\|u_t\|_2^2+\|(u\cdot\nabla)\, u\|_2^2\big)\,ds
    \\
  &\qquad  \leq C(p,
    \delta,\nu_0,\nu_1,\|f\|_{W^{1,2}(0,T;L^2)},\|u_0\|_{2,2},T,\Omega,\varepsilon)
    +\varepsilon\int_0^T\|D^2u\|_{2}^2\,ds,
  \end{aligned}
\end{equation*}
and, by choosing $\varepsilon>0$ small enough, we end the proof.
\end{proof}
As a consequence of the above result we have full $L^2$-space-time
regularity of the solution up to second-order space-derivatives, hence
the uniqueness of particle trajectories. The result is not optimal in
view of application to uniqueness of trajectories, in
the sense that some of the hypotheses can be slightly relaxed. For
instance $f_t\in L^2(\Omega_T)$ and $u_0\in W^{2,2}(\Omega)$ can be
removed (at the price of less regularity on $u_t$) by following a
slightly different path as we do in the next section.
\section{Proof of Theorem~\ref{thm:teoUniq}}
\label{sec:uniqueness}
In this section we finally address the problem of the uniqueness of
particle trajectories under ``minimal'' assumptions on the data.  We will
show how the previous regularity result, together with
Theorem~\ref{thm:Dashti-Robinson}, allow us to prove
Theorem~\ref{thm:teoUniq}.
\begin{proof}[Proof of Theorem~\ref{thm:teoUniq}] In the same way as
  in the proof of Lemma~\ref{lem:main-theo-with-delta}, we perform
  separately the \textit{a priori} estimates for the normal and
  tangential derivative of the time-weighted $\sqrt{t}\, u^m$ (which we call
  $\sqrt{t}\, u$). In particular, here we do not use a lot of regularity on
  $u_t$, but we have to face with a non-smooth $u_0$.  By adapting
  standard weighted estimates, we multiply the
  equation~\eqref{eq:genNewMod1-m} by $-t\,\D_{11}^2 u$. Integrating by
  parts, and with Young inequality we obtain
  \begin{equation*}
    \begin{aligned}
      \frac{1}{2}\frac{d}{dt} \big(t\,\|\D_1 u \|^2_2 \big) &+ \nu_0 \,
      t\,\|\nabla \D_1 u \|_2^2 + (p-1) \nu_1\,t\, \mathcal{I}_1( u )
      \\
      &\leq \frac{\nu_0}{2}t\, \|\D_{11}^2 u \|_2^2 + \frac{C}{\nu_0}
      t\,\big(\|( u \cdot \nabla ) u \|_2^2 + \|f\|_2^2\big) + \|\D_1 u
      \|^2_2.
    \end{aligned}
  \end{equation*}
  Integrating in time and using the energy estimate to bound
  $\int_0^t\|\D_1u\|^2\,ds$ it follows
  \begin{equation} 
    \label{eq:nabla+1}
    \begin{aligned}
      &t\,\|\D_1 \ut\|_2^2 + \nu_0\int_0^t s\,\|\nabla \D_1
      \us\|_{2}^2\,ds + \nu_1 \int_0^ts\,\mathcal{I}_1(\us) \,ds
      \\
      &\qquad \leq \frac{C}{\nu_0}\Big[\|u_0\|_2^2+\int_0^t s\,\big(
      \|(\us\cdot \nabla)\,\us\|_2^2 +\|f (s)\|^2_2\big)\,
      ds\Big]\quad\text{a.e. in }[0,T].
    \end{aligned}
  \end{equation}
  We take now the $L^2$-inner product of~\eqref{eq:genNewMod1-m} with
  $t\,u_{t}$. By suitable integrations by parts, and
  using~\eqref{eq:stress-tens-relation}-\eqref{eq:idea-furba} we reach
  \begin{equation*}
    \begin{aligned}
      t\,\| u_t \|_2^2 &+ \frac{\nu_0}{2}\frac{d}{dt} \big(t\,\|\nabla
      u\|_2^2\big) + {\nu_1} \frac{d}{dt}\big( t\, \mathcal{M}(u)\big)
      \\
      & \leq t \left( \left|\int_\Omega
          (u\cdot\nabla)u\,u_t\,dx\right| + \left|\int_\Omega f\,
          u_t\,dx\right|\right) + \nu_0\,\|\nabla u\|_2^2 +
      C\,\nu_1\,\mathcal{M}(u)
      \\
      & \leq \frac{t}{4}\big(\|(u\cdot\nabla)\,u\|_2^2 +
      \|f\|_2^2\big) + \frac{t}{2}\|u_t\|^2 + \nu_0\|\nabla u\|_2^2 +
      C\,\nu_1\|\dom u\|^p_p.
    \end{aligned}
  \end{equation*}
  Integrating this inequality in time, by appealing to the energy
  inequality and recalling that $\mathcal{M}(u)\geq 0$, it follows
  that for a.e. $ t\in [0, T]$
  \begin{equation} 
    \label{eq:time-space}
    \begin{aligned}
      \nu_0\, t\, \|\nabla \ut\|_2^2 &+ \int_0^t s\, \|u_t (s)
      \|_2^2\,ds
      \\
      & \leq C\bigg[\|u_0\|_2^2+\int_0^t s\,\big(\|(\us\cdot
      \nabla)\,\us\|_2^2 +\|f(s)\|_2^2\big)\, ds\bigg],
    \end{aligned}
  \end{equation}
  where $C$ depends on $p$, $\delta$, $\nu_0$, $\nu_1$, $T$, and
  $\Omega$.

  Let us now focus on the normal derivatives of $u$.  Arguing as in
  Step~2 of the proof of Lemma~\ref{lem:main-theo-with-delta}, and
  replacing $f$ with $f+ (u\cdot \nabla) u$, we infer that
  \begin{equation*}
    |\D^2_{22}u_1|\leq C\Big( |D_\ast^2 u| 
    +\frac{1}{2\nu_0}\Big[|\D_1\pi| + |u_t|+\big|(u\cdot\nabla)u\big|
    +|f|_2^2\Big]\Big)\quad \text{a.e. in } \Omega_T.
  \end{equation*}
  Then, squaring, multiplying by $t$, and integrating over
  $(0,t)\times\Omega$, we find
  \begin{equation} 
    \label{eq:u22}
    \begin{aligned}
      &\int_0^t s\,\|\D^2_{22} u_1 (s)\|^2\,ds
      \\
      & \leq \frac{C}{\nu_0}\int_0^ts\,\big( \|D_\ast^2 u (s)\|^2_2 +
      \|\D_1\pi(s)\|^2_2+\|u_t(s)\|^2_2
      +\|(u(s)\cdot\nabla)\,u(s)\|^2_2
      \big)\,ds,
    \end{aligned}
  \end{equation}
  To control $\int_0^ts\,\|\D_1\pi(s)\|^2_2\,ds$ we proceed again as
  in Step~2 of the proof of
  Lemma~\ref{lem:main-theo-with-delta}.  Thus, for
  a.e. $t\in[0, T]$, the following inequality holds true
  \begin{equation*}
    \begin{aligned}
      &     \int_0^t s\,\|\D_1\pi (s)\|_2^2\, ds 
      \\
      &\leq C\int_0^t s\,
      \big( \|u_t(s)\|_2^2 + \|\D_1 \nabla u (s)\|_2^2 +
      \delta^{p-2}\mathcal{I}_1(u)(s) +\|f(s)\|_2^2+\|(u(s)\cdot\nabla)\,u(s)\|^2_2 \big)\,ds
      \\
      &\leq C\Big[\|u_0\|_2^2+\int_0^t s\,\big( \|(\us\cdot
      \nabla)\,\us\|_2^2 +\|f (s)\|^2_2\big)\, ds\Big],
    \end{aligned}
  \end{equation*}
  where we have used relations~\eqref{eq:nabla+1}
  and~\eqref{eq:time-space}.  Once again we apply~\eqref{eq:nabla+1},
  so that relation~\eqref{eq:u22} gives, for a.e.  $t\in [0, T]$
  \begin{equation*}
    \begin{aligned}
      & \int_0^t s\, \| \D^2_{22} u_1 (s) \|^2_2\, ds
      \leq C\Big[\|u_0\|_2^2+\int_0^t s\,\big( \|(\us\cdot
      \nabla)\,\us\|_2^2 +\|f (s)\|^2_2\big)\, ds\Big],    
    \end{aligned}
  \end{equation*}
  where $C$ depends on $p$, $\delta$, $\nu_0$, $\nu_1$, $T$, and
  $\Omega$.  Summing up the above inequality with~\eqref{eq:nabla+1}
  and~\eqref{eq:time-space}, we get for a.e.  $t\in [0, T]$
  \begin{equation*}
    \begin{aligned}
      t\,\|\nabla \ut \|_2^2 &+ \nu_0\int_0^ts\,\|D^2 \us\|^2_2 \,ds
      \\
      &\leq C\Big[\|u_0\|_2^2+\int_0^t s\,\big( \|(\us\cdot
      \nabla)\,\us\|_2^2 +\|f (s)\|^2_2\big)\, ds\Big],     
%
    \end{aligned}
  \end{equation*}
  whit $C$ depending on $p$, $\delta$, $\nu_0$, $\nu_1$, $T$, and
  $\Omega$. The convective term can be estimated as
  in~\eqref{eq:nonlin-stima-1} and choosing $\varepsilon >0$ small
  enough we get, for a.e. $t\in[0,T]$,
  \begin{equation*}
    \begin{aligned}
  &    t\,\| \nabla \ut \|_2^2 + {\nu_0}\int_0^ts\,\|D^2\us\|^2_2\,ds
      \\
      &\quad \leq c_\varepsilon\int_0^t \big(s\,\|\nabla \us\|_2^2\big)\,
      \|\nabla \us\|_2^2\,ds + C(p,\delta, \nu_0,\nu_1,
      \|f\|_{L^{2}(0, T;L^2)}, \|u_0\|_{2}, T,\Omega).
   \end{aligned}
  \end{equation*}
  Hence, by using Gronwall inequality over $[\lambda,T]$ (for any
  $\lambda>0$), letting
  $\lambda\to 0^+$, and by using the energy inequality we get
  \begin{equation*}
    \int_0^Tt\,\|D^2 u(t)\|^2_2 \,dt \leq
    C(p,\delta, \nu_0,\nu_1, \|f\|_{L^{2}(0, T;L^2)}, \|u_0\|_{2}, T,\Omega).
  \end{equation*}
  Then, the assertion follows by means of
  Theorem~\ref{thm:Dashti-Robinson}.
\end{proof}
\section*{Acknowledgments}
The second author would like to thank G.~Modica for valuable comments
and discussions.


\begin{thebibliography}{99}
\bibitem{Bei2005} 
  H.~Beir\~ao da Veiga, \emph{On the regularity of flows with
    {L}adyzhenskaya shear-dependent viscosity and slip or nonslip
    boundary conditions}, Comm. Pure Appl. Math.  \textbf{58} (2005),
  552--577.

\bibitem{BEI:2009} 

  \bysame,\emph{Navier-Stokes equations with shear thinning
    viscosity. Regularity up to the boundary.}, J. Math. Fluid
  Mech. \textbf{11} (2009), 258-273.

\bibitem{BeiC2012}
  H.~Beir\~ao da Veiga and F.~Crispo, \emph{On the global ${W}^{2,q}$
    regularity for nonlinear ${N}$-systems of the {L}aplacian type in
    $n$ space variables}, {Nonlinear Anal.} \textbf{75} (2012),
  4346--4354.


\bibitem{Ber2009a} 
  L.~C. Berselli, \emph{On the {$W^{2,q}$}-regularity of
    incompressible fluids with shear-dependent viscosities: the
    shear-thinning case}, J. Math. Fluid Mech.  \textbf{11} (2009), 171--185.

\bibitem{BE-DI-RU:2010} 
  L.~C.  Berselli, L.~Diening and M.~R\r{u}\v{z}i\v{c}ka,
  \emph{Existence of strong solutions for incompressible fluids with
    shear dependent viscosities.}, J. Math. Fluid Mech. \textbf{12}
  (2010), 101-132.

\bibitem{CL1995} 
  J.-Y.~Chemin, and N.~Lerner, \emph{Flot de champs de vecteurs non
    lipschitziens et \'equations de {N}avier-{S}tokes},
  {J. Differential Equations}, \textbf{121} (1995), 314--328


\bibitem{CF1988}
  P.~Constantin and C.~Foias, \emph{Navier-{S}tokes equations},
  Chicago Lectures in Mathematics, University of Chicago Press,
  Chicago, IL, 1988.

\bibitem{CR:2009} 
  F.~Crispo \emph{A note on the global regularity of steady flows of
    generalized Newtonian fluids}. Port. Math. \textbf{66} (2009),   211-223.

\bibitem{CG2008}
  F.~Crispo and C.~R. Grisanti, \emph{On the existence, uniqueness and
    {$C^{1,\gamma}(\overline\Omega)\cap W^{2,2}(\Omega)$} regularity
    for a class of shear-thinning fluids}, J. Math. Fluid
  Mech. \textbf{10} (2008), 455--487.

\bibitem{DA-RO:2009} 
  M.~Dashti and J.~C. Robinson, \emph{A simple proof of uniqueness of
    the particle trajectories for solutions of the Navier-Stokes
    equations.}, Nonlinearity \textbf{22} (2009), 735-746.

\bibitem{DI-RU:2005} 
  L.~Diening and M.~R\r{u}\v{z}i\v{c}ka, \emph{Strong solutions for
    generalized Newtonian fluids.}, J. Math. Fluid Mech. \textbf{7}
  (2005), 413-450.

\bibitem{DRW2010} 
  L.~Diening, M.~R{\r u}{\v{z}}i{\v{c}}ka, and J.~Wolf,
  \emph{Existence of weak solutions for the unsteady motion of
    generalized {N}ewtonian fluids: {L}ipschitz truncation method},
  Ann. Scuola Norm. Sup. Pisa Cl. Sci. (5) \textbf{9} (2010),  1--46.

\bibitem{FGT1985}
  C.~Foias, C.~Guillop{\'e}, and R.~Temam, \emph{Lagrangian
    representation of a flow}, J. Differential Equations \textbf{57}
  (1985), 440--449.

  
\bibitem{KMS1997}
  P.~Kaplick{\'y}, J.~M{\'a}lek, and J.~Star{\'a}, \emph{Full
    regularity of weak solutions to a class of nonlinear fluids in two
    dimensions--stationary, periodic problem},
  Comment. Math. Univ. Carolin. \textbf{38} (1997),  681--695.

\bibitem{KMS1999}
  \bysame, \emph{{$C^{1,\alpha}$}-solutions to a class of nonlinear
    fluids in two dimensions---stationary {D}irichlet problem},
  Zap. Nauchn. Sem. S.-Peterburg.  Otdel. Mat. Inst. Steklov. (POMI)
  \textbf{259} (1999), Kraev. Zadachi Mat. Fiz. i
  Smezh. Vopr. Teor. Funkts. 30, 89--121, 297.


\bibitem{KMS2002} 

  \bysame, \emph{Global-in-time {H}\"older continuity of the velocity
    gradients for fluids with shear-dependent viscosities}, NoDEA
  Nonlinear Differential Equations Appl. \textbf{9} (2002), 175--195.

\bibitem{Kos2010}
  T.~Kost, \emph{Optimal convergence for the time discretization of
    generalized non-newtonian fluids in two dimensions (German)},
  Master's thesis, Freiburg.  Univ., Germany, 2010.


\bibitem{Lio1969}
  J.-L~Lions, \emph{Quelques m\'ethodes de r\'esolution des
    probl\`emes aux limites non lin\'eaires}, Dunod, Gauthier-Villars,
  Paris, 1969.

\bibitem{MRR1995}
  J.~M{\'a}lek, K.~R. Rajagopal, and M.~R{\r u}{\v{z}}i{\v{c}}ka,
  \emph{Existence and regularity of solutions and the stability of the
    rest state for fluids with shear dependent viscosity},
  Math. Models Methods Appl. Sci. \textbf{5} (1995), 789--812.

\bibitem{Nec1966}
  J.~Ne{\v{c}}as, \emph{Sur les normes equivalentes dans $W^k_p$ et
    sur la coercivite des formellement positives}, \'{E}quations aux
  {D}\'eriv\'ees {P}artielles ({S}\'em. {M}ath. {S}up., {N}o. 19,
  \'{E}t\'e, 1965), Presses Univ.  Montr\'eal, Montreal, Que., 1966,
  pp.~102--128.

\bibitem{RS2009}
  J.~C. Robinson and W.~Sadowski, \emph{Almost-everywhere uniqueness
    of {L}agrangian trajectories for suitable weak solutions of the
    three-dimensional {N}avier-{S}tokes equations}, Nonlinearity
  \textbf{22} (2009),  2093--2099.

\bibitem{RS2009b}
  \bysame, \emph{A criterion for uniqueness of {L}agrangian
    trajectories for weak solutions of the 3{D} {N}avier-{S}tokes
    equations}, Comm. Math. Phys.  \textbf{290} (2009),  15--22.
  
\bibitem{Ser1997}
  G.~A. Seregin, \emph{Flow of two-dimensional generalized {N}ewtonian
    fluid}, Algebra i Analiz \textbf{9} (1997),  167--200.


\end{thebibliography}
\end{document}